\theoremstyle{plain}
\newtheorem{prop}{Proposition}[section]
\newtheorem{corollary}[prop]{Corollary}%
\newtheorem{lemma}[prop]{Lemma}%
\newtheorem{remark}[prop]{{Remark}}%
\newtheorem{definition}[prop]{{Definition}}%
\newtheorem{defn}[prop]{{Definition}}%
\newtheorem{example}[prop]{{Example}}
\theoremstyle{plain}
\newtheorem{theorem}[prop]{Theorem}%
\theoremstyle{break}
\newtheorem{open?}[prop]{{Open Question}}
\theoremstyle{break}
\newcommand{\Min}{\operatorname{Min}}
\begin{document}

\title[Scale and tidy subgroups for Weyl-transitive automorphism groups]{Scale and tidy subgroups for Weyl-transitive automorphism groups of buildings}
\author{U. Baumgartner} 
\address{School of Mathematics and Statistics, 
University of Sydney, Sydney NSW 2006,
Australia
          Tel.: +61-2 9351 4221
           Fax: +61-2 9351 4534
}
\email{Udo.Baumgartner@sydney.edu.au}           
\author{J. Parkinson}
\address{School of Mathematics and Statistics, 
University of Sydney, Sydney NSW 2006,
Australia
          Tel.: +61-2 9351 4221
           Fax: +61-2 9351 4534
}
\email{James.Parkinson@sydney.edu.au}
\author{J. Ramagge}
\address{School of Mathematics and Statistics, 
University of Sydney, Sydney NSW 2006,
Australia
          Tel.: +61-2 9351 4533
           Fax: +61 2 9351 4534
}
\email{Jacqui.Ramagge@sydney.edu.au}

\subjclass[2010]{22D05, 52E24, 20E36, 20F65.} 
\keywords{totally disconnected locally compact groups, scale function, tidy subgroups, buildings}

\thanks{This research was supported by Australian Research Council grant  DP150100060.}

\date{\today}

\begin{abstract}
We consider
closed, Weyl-transitive groups 
of automorphisms 
of thick buildings. 
For each element of such a group,
we derive a combinatorial formula 
for its scale  
and 
establish the existence 
of a tidy subgroup 
for it 
that equals 
the stabilizer of 
a simplex. 
Simplices 
whose 
stabilizers are tidy 
for 
some element of the group
are 
characterized
in terms of 
the minimal set 
of the isometry 
induced by the element 
on the Davis-realisation 
of the building 
and 
in terms of the Weyl-distance 
between them 
and their image. 
We use our results 
to derive 
some topological properties 
of closed, Weyl-transitive groups 
of automorphisms. 
\end{abstract}

\maketitle

\section{Introduction}\label{sec:intro}

Tidy subgroups 
and the scale function 
provide 
important structural information 
on a totally disconnected locally compact group and have found diverse applications including the commensurated subgroup problem in arithmetic groups~\cite{SW:13}, and random walks and ergodic theory~\cite{DSW:06, JRW:96}.
Describing tidy subgroups 
for a given group 
and 
computing the scale of elements 
is 
however 
a challenging problem. 
For example, see \cite{Glo:98} for 
the computation of 
the scale function for $p$-adic Lie groups.
In a more general setting 
the main challenges in 
characterising tidy subgroups 
and 
calculating the scale function 
lie in either 
finding a description of the group that 
is suitable for the task, 
or conversely finding a way 
to express the concepts of tidyness and scale 
within the given description of the group. 

Every automorphism group 
of a locally finite cell complex 
is totally disconnected and locally compact 
in the compact-open topology. 
In this note 
we examine the example of 
the groups of automorphisms 
of a locally finite building 
with sufficiently transitive action. This important class of examples includes the subclasses of Lie groups over nonarchimedean local fields and Kac-Moody groups defined over finite fields. 
%
%
The methods used exploit the beautiful and rich geometry of the underlying building. Using this geometric approach we are able to provide 
a concrete description 
of the tidy subgroups, 
and 
a method 
to calculate the scale 
of an element 
directly in terms of 
the combinatorics 
of the action 
on the building. 
%

\section{Tidy subgroups and the scale}\label{sec:tidy&scale}

Every totally disconnected, locally compact group 
has 
compact, open subgroups by Van Dantzig's Theorem. 
Given a continuous automorphism, 
$\alpha$,  
of such a group
with continuous inverse, 
and a compact, open subgroup~$V$,
the index $|\alpha(V)\colon \alpha(V)\cap V|$ 
measures the distortion 
of~$V$ 
under~$\alpha$. 
This index is finite 
because
$\alpha(V)$ is again compact and open 
and  
all compact, open subgroups 
of a group 
are commensurable. 

Since the indices $|\alpha(V)\colon \alpha(V)\cap V|$ 
are integers, 
for every~$\alpha$  
there exist compact, open subgroups 
that minimize this index. 
These subgroups are called \emph{tidy for~$\alpha$}
and 
the minimal distortion index 
is called \emph{the scale of~$\alpha$}, 
denoted $s(\alpha)$.

In this paper 
we restrict 
our attention to 
inner automorphisms.  
A compact, open subgroup 
will be called 
\emph{tidy for} 
a group element~$g$ 
if and only if 
it is 
tidy for inner conjugation 
by~$g$, 
given by 
$x\mapsto gxg^{-1}$.  
Likewise 
the scale 
of inner conjugation
by~$g$ 
will be denoted~$s(g)$ 
and called 
\emph{the scale of~$g$}.  

There are several ways 
to characterize tidy subgroups.
A criterion 
for tidiness 
due to M\"oller 
will be 
the one 
we most often use in this paper. 

\begin{lemma}[{\cite[Corollary~3.5]{struc(tdlcG-graphs+permutations)}}]\label{lem:tidyness-crit_powers}
A compact, open subgroup~$U$ 
is tidy for an element~$g$ 
if and only if
\[
|U\colon U\cap g^{-n}Ug^n|=|U\colon U\cap g^{-1}Ug|^n \qquad \text{for all integers }n\geq 0\,.
\]
\end{lemma}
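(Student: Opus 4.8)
The plan is to handle both directions at once by realising the scale of $g$ as an exponential growth rate for the indices occurring in the statement. Write $\alpha$ for conjugation $x\mapsto gxg^{-1}$, so that $g^{-n}Ug^{n}=\alpha^{-n}(U)$, and for a compact open subgroup $V$ and an integer $n\geq 0$ put $s_{n}(V):=|V\colon V\cap\alpha^{-n}(V)|=|\alpha^{n}(V)\colon\alpha^{n}(V)\cap V|$, which is finite because compact open subgroups are commensurable. Then $s_{0}(V)=1$ and $s_{1}(V)=|\alpha(V)\colon\alpha(V)\cap V|$ is the distortion index of $V$, so by definition $U$ is tidy for $g$ exactly when $s_{1}(U)=s(g)$; what has to be shown is that $s_{n}(U)=s_{1}(U)^{n}$ for every $n\geq 0$ is equivalent to $s_{1}(U)=s(g)$.

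The first step is a submultiplicativity estimate: $s_{m+n}(V)\leq s_{m}(V)\,s_{n}(V)$ for every compact open $V$ and all $m,n\geq 0$. To prove it I would begin from the inclusion $V\cap\alpha^{-(m+n)}(V)\supseteq V\cap\alpha^{-m}(V)\cap\alpha^{-(m+n)}(V)$, split the index of the smaller subgroup in $V$ through the intermediate subgroup $V\cap\alpha^{-m}(V)$, and apply $\alpha^{m}$ to the second factor so as to recognise it as $|\alpha^{m}(V)\cap V\colon\alpha^{m}(V)\cap V\cap\alpha^{-n}(V)|$; this is at most $s_{n}(V)$ because $\alpha^{m}(V)\cap V$ is a subgroup of $V$ and the natural map $H/(H\cap K)\hookrightarrow V/(V\cap K)$ is injective for every $H\leq V$, taking $K=\alpha^{-n}(V)$. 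Iterating gives $s_{n}(V)\leq s_{1}(V)^{n}$ for all $n$, so --- the reverse inequality being exactly what is in question --- the identity for $U$ is equivalent to $\inf_{n\geq 1}s_{n}(U)^{1/n}=s_{1}(U)$.

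The second step identifies this infimum: for every compact open $V$ one has $\inf_{n\geq 1}s_{n}(V)^{1/n}=s(g)$. The lower bound $s_{n}(V)\geq s(g^{n})=s(g)^{n}$ follows because $s(g^{n})$ is the minimum of the distortion indices $|\alpha^{n}(V')\colon\alpha^{n}(V')\cap V'|$ over compact open $V'$, together with the multiplicativity $s(g^{n})=s(g)^{n}$ from Willis's structure theory. For the upper bound I would choose a subgroup $W$ tidy for $g$ --- one exists by Willis --- and use that $W$ is then tidy for every power $g^{n}$, so that applying the definition of the scale to $g^{n}$ gives $s_{n}(W)=s(g^{n})=s(g)^{n}$; a coset count through $V\cap W$, using the finite indices $|V\colon V\cap W|$ and $|W\colon V\cap W|$ and the same injectivity observation as above, then produces $s_{n}(V)\leq|V\colon V\cap W|\cdot|W\colon V\cap W|\cdot s(g)^{n}$, so that $s_{n}(V)^{1/n}\to s(g)$ and hence $\inf_{n\geq 1}s_{n}(V)^{1/n}=s(g)$.

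Putting the two steps together closes both implications simultaneously: $s_{n}(U)=s_{1}(U)^{n}$ for all $n\geq 0$ holds if and only if $s_{1}(U)=\inf_{n\geq 1}s_{n}(U)^{1/n}=s(g)$, which is precisely the statement that $U$ attains the minimal distortion index, that is, that $U$ is tidy for $g$. I expect the real work to lie in the second step --- not the elementary index bookkeeping, but the appeal to Willis's structure theory (existence of tidy subgroups, stability of tidiness under passage to powers of $g$, and multiplicativity of the scale along powers): it is worth isolating exactly which of these facts are used and checking that they are established independently of the present statement, so that the argument is not circular. The first step is routine but demands care in keeping track of the various intersections.
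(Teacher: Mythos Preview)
The paper does not supply a proof of this lemma; it is quoted as Corollary~3.5 of M\"oller's paper and used as a black box. So there is no in-paper argument to compare your proposal against.

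Your argument is correct. The submultiplicativity in step one is exactly as you describe, and the sandwich $s(g)^{n}\leq s_{n}(V)\leq |V:V\cap W|\cdot|W:V\cap W|\cdot s(g)^{n}$ in step two follows from the chain of inclusions you indicate (for the upper bound, pass through $V\cap W\cap\alpha^{-n}(W)$ and use that each intermediate index is bounded by the corresponding index in the larger group). Once you have both bounds, the equivalence closes cleanly: $s(g)^{n}\leq s_{n}(U)\leq s_{1}(U)^{n}$ always, so $s_{n}(U)=s_{1}(U)^{n}$ for all $n$ forces $s_{1}(U)=s(g)$ by taking $n$-th roots and letting $n\to\infty$, and conversely $s_{1}(U)=s(g)$ collapses the sandwich.

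Your caution about circularity is well placed but the worry is unfounded: the three ingredients you draw on---existence of a tidy subgroup, stability of tidiness under powers, and $s(g^{n})=s(g)^{n}$---are all established in Willis's original structure papers without reference to M\"oller's criterion. M\"oller's own proof proceeds differently, via a graph-theoretic description of the scale that makes the power law visible directly; your route trades that self-containedness for a short argument that leans on the structure theory as input. Both are legitimate; yours is arguably more transparent once one grants the Willis machinery.
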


The following interpretation 
of tidy subgroups 
motivates 
our geometric approach 
to tidy subgroups.

\begin{lemma}[{\cite[Lemma~2(0)]{direction(aut(tdlcG))}}]\label{lem:metricCOS}
A compact, open subgroup 
is tidy 
for~$\alpha$ 
if and only if 
the displacement $\mathbf{d}(\alpha(O),O)$ 
with respect to 
the metric~$\mathbf{d}$ 
on the set of compact, open subgroups 
defined by $\mathbf{d}(V,W):=\log\left(|V\colon V\cap W|\cdot|W\colon W\cap V|\right)$
is minimal 
at~$O$. 
The value of 
this minimal displacement 
is $\log(s(\alpha))+\log(s(\alpha^{-1}))$.
\end{lemma}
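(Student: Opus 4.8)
The plan is to split the statement into a purely formal part about indices and one genuine input from the structure theory of tidy subgroups. First I would check that $\mathbf{d}$ is a metric on the set of compact, open subgroups: symmetry is built in, $\mathbf{d}(V,W)=0$ forces $V\subseteq W$ and $W\subseteq V$ hence $V=W$, and the triangle inequality follows from the submultiplicativity $|U\colon U\cap W|\le|U\colon U\cap V|\cdot|V\colon V\cap W|$ — obtained by inserting $U\cap V\cap W$ and using that for subgroups $H\le G$ (and any subgroup $K$) the coset map $H/(H\cap K)\to G/(G\cap K)$ is injective, so $|H\colon H\cap K|\le|G\colon G\cap K|$ — together with the analogous inequality after exchanging $U$ and $W$, multiplied. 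All indices occurring are finite because the compact, open subgroups of the ambient group are mutually commensurable.

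The key observation is the identity $|O\colon O\cap\alpha(O)|=|\alpha^{-1}(O)\colon\alpha^{-1}(O)\cap O|$, valid because $\alpha$ is an automorphism, hence index-preserving, and $O\cap\alpha(O)=\alpha(\alpha^{-1}(O)\cap O)$. Consequently
\[
\mathbf{d}(\alpha(O),O)=\log|\alpha(O)\colon\alpha(O)\cap O|+\log|\alpha^{-1}(O)\colon\alpha^{-1}(O)\cap O|,
\]
so $\mathbf{d}(\alpha(\,\cdot\,),\,\cdot\,)$ is, up to the logarithm, the \emph{sum} of the distortion index for $\alpha$ and the distortion index for $\alpha^{-1}$. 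Since by definition these are bounded below by $s(\alpha)$ and $s(\alpha^{-1})$ respectively, one gets $\mathbf{d}(\alpha(V),V)\ge\log s(\alpha)+\log s(\alpha^{-1})$ for every compact, open $V$, with equality if and only if both distortion indices of $V$ attain their minima simultaneously, i.e.\ $V$ is tidy for $\alpha$ \emph{and} tidy for $\alpha^{-1}$.

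It then remains to identify ``tidy for $\alpha$ and for $\alpha^{-1}$'' with ``tidy for $\alpha$'', and this is the one step I would import rather than reprove: the classical symmetry of tidy subgroups, namely that a compact, open subgroup minimises the distortion index for $\alpha$ exactly when it does so for $\alpha^{-1}$. Granting it, a subgroup $O$ that is tidy for $\alpha$ is also tidy for $\alpha^{-1}$, so $\mathbf{d}(\alpha(O),O)=\log s(\alpha)+\log s(\alpha^{-1})$, which by the previous paragraph is the global minimum of $\mathbf{d}(\alpha(\,\cdot\,),\,\cdot\,)$; conversely, if $\mathbf{d}(\alpha(O),O)$ is minimal then it equals $\log s(\alpha)+\log s(\alpha^{-1})$, and since each of its two summands already dominates the corresponding term, both summands must equal their minimum, in particular $|\alpha(O)\colon\alpha(O)\cap O|=s(\alpha)$, so $O$ is tidy for $\alpha$. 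This yields the equivalence and simultaneously shows the minimal displacement equals $\log s(\alpha)+\log s(\alpha^{-1})$. The only real obstacle is the $\alpha\mapsto\alpha^{-1}$ symmetry of tidiness; everything else is bookkeeping with indices.
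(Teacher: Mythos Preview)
Your argument is correct, but there is nothing in the paper to compare it against: the lemma is quoted from \cite[Lemma~2(0)]{direction(aut(tdlcG))} and no proof is given here. Your outline is in fact the standard route to this result --- rewrite $\mathbf{d}(\alpha(O),O)$ as the sum of the distortion indices for $\alpha$ and for $\alpha^{-1}$ via the automorphism identity, bound each summand from below by the corresponding scale, and then invoke the $\alpha\leftrightarrow\alpha^{-1}$ symmetry of tidiness (which goes back to Willis's original structure theory) to see that both minima are attained simultaneously. The metric verification is optional for the statement as formulated but does no harm.
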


The following 
formulas for the scale 
will be of central importance 
in section~\ref{sec:straight-displacements}.

\begin{lemma}
Suppose that~$O$ is tidy for~$g$. 
Then 
\[
s(g)=|gOg^{-1}\colon gOg^{-1}\cap O|=|O\colon O\cap g^{-1}Og|=|O\backslash OgO|
\]
\end{lemma}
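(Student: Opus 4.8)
The plan is to verify the three equalities in turn. The leftmost one is essentially the definition of tidiness: by the set-up of Section~\ref{sec:tidy&scale}, $s(g)$ is the minimum over all compact, open subgroups $V$ of the distortion index $|gVg^{-1}\colon gVg^{-1}\cap V|$ under inner conjugation by $g$, and $O$ being tidy for $g$ means precisely that this minimum is attained at $V=O$; hence $s(g)=|gOg^{-1}\colon gOg^{-1}\cap O|$. The middle equality then follows because conjugation by $g^{-1}$ is a group automorphism of the ambient group: it carries the pair $\bigl(gOg^{-1},\, gOg^{-1}\cap O\bigr)$ to $\bigl(O,\, O\cap g^{-1}Og\bigr)$, and any group isomorphism preserves the index of one subgroup in another.

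For the last equality I would exhibit a bijection between $O\backslash OgO$ and the set of right cosets of $H:=O\cap g^{-1}Og$ in $O$, the latter having cardinality $|O\colon H|$. Every element of $OgO$ has the form $o_1go_2$ with $o_1,o_2\in O$, and $O(o_1go_2)=Ogo_2$, so $O\backslash OgO=\{\,Ogo:o\in O\,\}$. I would then check that $Ogo=Ogo'$ holds if and only if $o(o')^{-1}\in g^{-1}Og$, which for $o,o'\in O$ is equivalent to $o(o')^{-1}\in H$, that is, to $Ho=Ho'$. Hence $o\mapsto Ogo$ descends to a well-defined bijection $H\backslash O\to O\backslash OgO$, and counting gives $|O\backslash OgO|=|O\colon H|=|O\colon O\cap g^{-1}Og|$.

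I do not anticipate a genuine obstacle here: once tidiness has been set up as above, this is a bookkeeping lemma. The only point needing slight care is the side on which cosets are taken in the double coset $OgO$; once one observes that the left cosets of $O$ lying inside $OgO$ are naturally indexed by $O/(O\cap g^{-1}Og)$ via right translation within $O$, the count is immediate. One could alternatively derive the symmetry between $|gOg^{-1}\colon gOg^{-1}\cap O|$ and $|O\colon O\cap g^{-1}Og|$ from Lemma~\ref{lem:metricCOS} and the symmetry of the metric $\mathbf{d}$, but the direct argument above is shorter and self-contained.
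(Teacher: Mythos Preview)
Your argument is correct and matches the paper's for the first two equalities (definition of tidiness, then invariance of the index under conjugation). For the third equality the paper simply cites external references (Lemma~3.9 in~\cite{graphTh-descr(scale-mult-semiGs(aut))} or Section~3.1 in~\cite{Bruhat-decomp&struc(HeckeR(padicChevalleyGs))}), whereas you supply the explicit bijection $H\backslash O\to O\backslash OgO$, $Ho\mapsto Ogo$; this is the standard elementary proof and is exactly what those references contain, so your version is more self-contained but not a different route.
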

\begin{proof}
The first formula 
for the scale 
is immediate from the definition. 
The second one 
follows by 
invariance of 
the subgroup index 
under conjugation. 
The third one 
can be verfied   
for example 
using Lemma~3.9 
in~\cite{graphTh-descr(scale-mult-semiGs(aut))} 
or Section~3.1 in~\cite{Bruhat-decomp&struc(HeckeR(padicChevalleyGs))}
\end{proof}

\section{Framework for buildings}\label{sec:buildings}

The theory of buildings grew from the fundamental work of Jacques Tits starting in the 1950s. The initial impetus was to give a uniform description of semisimple Lie groups and algebraic groups by associating a geometry to each such group. This ``geometry of parabolic subgroups'' later became known as the (spherical) building of the group~\cite{Tit:74}. The utility and scope of building theory has since far outgrown the original raison d'\^etre, with crucial applications in the theory of Lie groups defined over nonarchimedean local fields (the \textit{affine buildings}), and more broadly the theory of Kac-Moody groups. We will describe the latter connection in Example~\ref{ex:Kac} below.

Our main reference for the theory of buildings is~\cite{buildings_A&B}. Let us briefly fix notation. Let $(W,S)$ be a Coxeter system (with $|S| <\infty$) and let $\ell : W\to\mathbb{Z}_{\geq 0}$ be the length function on $W$ with respect to the generating set $S$. A building of type $(W,S)$ is a pair $(\Delta,\delta)$ where $\Delta$ is a nonempty set (whose elements are called chambers) and $\delta:\Delta\times\Delta \to W$ is a function (called the Weyl distance function) such that if $x,y\in\Delta$ then the following conditions hold, where $w=\delta(x,y)$:
\begin{enumerate}
\item[(1)] $w=1$ if and only if $x=y$.
\item[(2)] For each $s\in S$ there is $z\in\Delta$ with
$\delta(y,z)=s$ and $\delta(x,z)=ws$. 
\item[(3)] If $z\in\Delta$ with $\delta(y,z)=s\in S$ then $\delta(x,z)\in \{ws,w\}$. Moreover, if $\ell(ws)=\ell(w)+1$ then $\delta(x,z)=ws$. 
\end{enumerate}

The Coxeter group~$W$ 
is itself a building 
of type~$(W,S)$, 
with Weyl distance function given by $\delta(w,w')=w^{-1}w'$ (this building is called the \textit{Coxeter complex}).
Every subset of $\Delta$ which is $\delta$-isometric to the Coxeter complex is called an \emph{apartment} of $\Delta$, and a fundamental property of buildings is that given any two chambers $x,y\in\Delta$ there exists an apartment containing them both. 

A Coxeter system $(W,S)$ is called \textit{spherical} if $|W|<\infty$, and a building is called \textit{spherical} if its Coxeter system is spherical. In this paper we are primarily concerned with non-spherical buildings. However, as described below, certain spherical subbuildings play an important role. 

If $s\in S$, chambers $x,y\in\Delta$ are \textit{$s$-adjacent} (written $x\sim_s y$) if $\delta(x,y)=s$. The building $\Delta$ is:
\begin{enumerate}
\item[(1)] \textit{locally finite} if $|\{y\colon x\sim_s y\}|$ is finite for every $x\in\Delta$ and $s\in S$,
\item[(2)] \textit{thick} if $|\{y\colon x\sim_s y\}|\geq 2$ for every $x\in\Delta$ and $s\in S$,
\item[(3)] \textit{regular} if it is locally finite and $q_s=|\{y\colon x\sim_s y\}|$ is independent of~$x\in\Delta$.
\end{enumerate}
Henceforth, $(\Delta,\delta)$ denotes a non-spherical locally finite thick building of type $(W,S)$. Moreover, our assumptions on the group $G$ of automorphism of $\Delta$ (see below) will imply that $(\Delta,\delta)$ is also regular. In this case, if $w\in W$ then the value 
$$
q_w:=q_{s_1}\cdots q_{s_n}
$$
is independent of the particular reduced decomposition $w=s_1\cdots s_n$ for $w$ chosen (see, for example, \cite[Proposition~2.1]{Par:06}).

A \textit{gallery} of \textit{type} $(s_1,\ldots,s_n)$ from a chamber $x$ to a chamber $y$ is a sequence $x=x_0\sim_{s_1}x_1\sim_{s_2}\cdots \sim_{s_n}x_n=y$. We say that this gallery has \textit{length} $n$. A key property is that this gallery has minimal length amongst all galleries from $x$ to $y$ if and only if $\ell(s_1\cdots s_n)=n$. In other words, a gallery is minimal if and only if its type is reduced in~$W$. 

\begin{example}\label{ex:Kac}
Let $G=G(\mathbb{F})$ be Chevalley group over a field $\mathbb{F}$, with Coxeter system $(W,S)$. Let $B$ be a `Borel subgroup' (generated by the positive root subgroups and the torus elements). Then $(\Delta,\delta)$ is a spherical building of type $(W,S)$, where $\Delta=G/B$ and $\delta(gB,hB)=w$ if and only if $g^{-1}h\in BwB$. Thus $(\Delta,\delta)$ may be viewed as a combinatorial/geometric object encoding the structure of the Bruhat decomposition 
$
G=\bigsqcup_{w\in W}BwB, 
$ which highlights the original motivation for the invention of buildings. 

There is a far-reaching generalisation of this setup, where $G=G(\mathbb{F})$ is a Kac-Moody group over $\mathbb{F}$ with Coxeter system $(W,S)$. In this more general situation $\Delta$ is typically not spherical (that is, $|W|=\infty$). The building $(\Delta,\delta)$ is always thick, and it is locally finite and regular if and only if $\mathbb{F}=\mathbb{F}_q$ is a finite field, in which case $q_s=q$ for all $s\in S$. 
\end{example}

An \textit{automorphism} of $(\Delta,\delta)$ is a bijection $g:\Delta\to \Delta$ such that $\delta(g(x),g(y))=\delta(x,y)$ for all $x,y\in\Delta$. We will call such automorphisms \textit{type preserving} to distinguish from the slightly more general notion where diagram automorphisms of the underlying Coxeter system are permitted. 
A group $G$ of type preserving automorphisms is \textit{Weyl-transitive} if for all chambers $x,y,x',y'\in\Delta$ with $\delta(x,y)=\delta(x',y')$ there exists $g\in G$ with $g.x=x'$ and $g.y=y'$. Of course this implies that if $(\Delta,\delta)$ is locally finite then it is also regular. In Example~\ref{ex:Kac} each $g\in G$ acts as a type preserving automorphism on $\Delta=G/B$ in the obvious way, and the group $G/K$ acts Weyl transitively (where $K$ is the kernel of the action). 

Let $J\subseteq S$. Let $W_J=\langle \{s\colon s\in J\}\rangle$ be the \textit{standard parabolic subgroup of type $J$}. The \textit{$J$-residue} of a chamber $x\in\Delta$ is the set $R_J(x)=\{y\in\Delta\colon \delta(x,y)\in W_J\}$. It follows easily from the axioms that $(R_J(x),\delta|_{R_J(x)})$ is a building of type $(W_J,J)$. Let $\mathcal{R}$ be a residue and let $x\in\Delta$ be a chamber. There exists a unique chamber $y\in \mathcal{R}$ at minimal distance from $x$. This chamber is denoted $y=\mathrm{proj}_{\mathcal{R}}(x)$, and $\mathrm{proj}_{\mathcal{R}}:\Delta\to \mathcal{R}$ is called the \textit{projection onto $\mathcal{R}$}. See \cite[Section~5.3.2]{buildings_A&B} for basic properties of projections. 

Following \cite[Chapter~12]{buildings_A&B} there is a standard way to consider $(\Delta,\delta)$ as a simplicial complex by considering the partially ordered set of all spherical residues (that is, the $J$-residues with $|W_J|<\infty$). A simplex corresponding to a residue of type $J=\{s\}$ is called a \textit{panel} (of \textit{cotype} $s$). If $\Delta$ is locally finite then the associated simplicial complex is also locally finite (it is crucial here that only the spherical residues are considered), and thus its automorphism group totally disconnected, locally compact 
and also 
unimodular 
if it is Weyl-transitive 
(\cite[Corollary~5]{flatrk(AutGs(buildings))}) . 

The above simplicial complex has a natural geometric realisation $(X,d)$ as a $\mathsf{CAT}(0)$ space, called the \textit{Davis realisation} of the building  (see~\cite[Theorem 12.66]{buildings_A&B}). Let us briefly describe some geometric properties of this space. Firstly, each apartment $\mathbb{A}$ of $X$ is a $\mathsf{CAT}(0)$ realisation of the Coxeter complex of $(W,S)$. Thus there is a notion of \textit{walls} in $\mathbb{A}$ as the fixed point sets of the reflections in $W$ (that is, the elements $wsw^{-1}$ with $w\in W$ and $s\in S$). Each wall divides $\mathbb{A}$ into exactly two connected components (called \textit{halfspaces}), and a wall is said to \textit{separate} points $x,y\in \mathbb{A}$ if $x$ and $y$ do not lie in a common halfspace. See, for example, \cite{strongTitsAlt(<CoxeterG)} and~\cite{asymp-behave(word-metrics>CoxeterGs)} for further details. 

A \textit{wall} in the Davis realisation $X$ is a wall in some apartment of $X$, and similarly a \textit{halfspace} in $X$ is a halfspace in some apartment of~$X$. We say that a wall in $X$ \textit{separates} points $x,y\in X$ if this wall lies in some apartment containing $x$ and $y$ and separates these points in this apartment. Any geodesic in $X$ 
having a nondegenerate piece 
in a wall 
lies entirely in 
the wall, and
halfspaces and walls 
are convex. Moreover, 
two points 
in $X$ 
are separated by  
a wall 
if and only if 
their \textit{carriers} (defined as 
the simplex 
whose residue is 
the set of chambers 
whose realisation 
contains the point; see \cite[Definition~12.19]{buildings_A&B}), 
are separated by the wall.
In particular, we have:

\begin{lemma}\label{lem:separating_walls-compatibility}
The set of walls 
separating points 
in the Davis realisation 
equals 
the set of walls 
separating 
the supporting simplices 
of those points.  
%
\end{lemma}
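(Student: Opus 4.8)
The plan is to reduce the statement to the characterisation quoted in the text, namely that two points in $X$ are separated by a wall if and only if their carriers are separated by that wall. The term ``supporting simplex'' in the lemma is synonymous with ``carrier'' as defined just above (the simplex whose residue is the set of chambers whose realisation contains the point), so the substance of the lemma is already essentially recorded. What remains is to turn the pointwise ``if and only if'' into the claimed equality of sets of walls.

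First I would fix notation: for $x \in X$ let $\sigma(x)$ denote the carrier (supporting simplex) of $x$. I want to show that for any wall $m$ in $X$, the following are equivalent: (a) $m$ separates the pair of points $x, y$; (b) $m$ separates the pair of simplices $\sigma(x), \sigma(y)$. For this I would first recall the definition of a wall separating two simplices: a wall $m$, lying in some apartment $\mathbb{A}$, separates simplices $\sigma, \tau$ (both of which must have representatives in $\mathbb{A}$, or one argues via a common apartment) if $\sigma$ and $\tau$ do not lie in a common halfspace of $\mathbb{A}$ determined by $m$. The key geometric input, already granted by the discussion preceding the lemma, is precisely the equivalence of (a) and (b) for a single wall. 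So the proof is: pick an apartment containing both points $x$ and $y$ (such exists by the building axioms applied at the level of chambers, together with the fact that an apartment containing two chambers contains their carriers); within that apartment, invoke the quoted compatibility; then take the union over all walls.

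Concretely, the steps in order are: (1) reduce to a single fixed wall $m$ and a fixed apartment $\mathbb{A}$ containing $x, y$ and hence $\sigma(x), \sigma(y)$; (2) observe that $m$ separates $x$ from $y$ in $\mathbb{A}$ exactly when $x$ and $y$ lie in opposite open halfspaces, and that since each of $x, y$ lies in the (closed) realisation of its carrier, which is connected and disjoint from $m$ unless the carrier meets $m$, the point $x$ lies strictly on one side of $m$ iff $\sigma(x)$ does; (3) conclude the two-point/two-simplex equivalence for $m$; (4) since every wall of $X$ separating $x$ from $y$ lies in \emph{some} apartment containing $x$ and $y$ (by definition of separation in $X$), and likewise for simplices, taking the union over all such walls yields the claimed set equality.

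The main obstacle is step (2): one must handle the boundary case where the carrier $\sigma(x)$ meets the wall $m$, i.e.\ a panel or simplex lying inside $m$. Here the cited facts do the work — ``any geodesic having a nondegenerate piece in a wall lies entirely in the wall'' and ``halfspaces and walls are convex'' — so that the realisation of a simplex not contained in $m$ lies entirely in one open halfspace, while a simplex contained in $m$ is separated from nothing. I would spell this out: if $\sigma(x) \not\subseteq m$ then $|\sigma(x)|$ is a convex set disjoint from the wall except possibly along a face, hence contained in the closure of one halfspace and meeting the open halfspace, so all its points — in particular $x$ — lie weakly on the same side, and $x \notin m$ forces $x$ into the open halfspace. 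The remaining bookkeeping (matching the definition of a wall separating simplices with the one separating points, and the passage to a common apartment) is routine given \cite[Section~5.3.2]{buildings_A&B} and \cite[Chapter~12]{buildings_A&B}.
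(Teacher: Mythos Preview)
Your proposal is correct and follows the paper's own approach: the paper gives no formal proof at all, treating the lemma as an immediate consequence of the sentence just preceding it (``two points in $X$ are separated by a wall if and only if their carriers are separated by the wall''), introduced by ``In particular, we have.'' You correctly identify this reduction and simply spell out the details the paper leaves implicit.
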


%
%

Recall that $\Delta$ is assumed to be locally finite, thick, and regular. If $\pi$ is a panel of cotype $s$, we define the \textit{thickness of $\pi$} to be $q_{\pi}=q_s$. Thus the residue of $\pi$ contains $q_{\pi}+1$ chambers. If $m$ is a wall of the Davis realisation, we define the \textit{thickness of $m$} to be $q(m)=q_{\pi}$ for any panel $\pi$ contained in $m$. 
\label{page:well-def(wall-thickness)}
This is well defined, because if $\pi$ and $\pi'$ are panels contained in $m$ with associated residues $\mathcal{R}$ and $\mathcal{R}'$ then it is easily seen that $\mathrm{proj}_{\mathcal{R}}:\mathcal{R}'\to\mathcal{R}$ and $\mathrm{proj}_{\mathcal{R}'}:\mathcal{R}\to\mathcal{R}'$ are mutually inverse bijections. The \textit{thickness of a gallery $\gamma$} of type $s_1,\ldots,s_n$ is $q(\gamma)=q_{s_1}\cdots q_{s_n}$. Note that if $w=s_1\cdots s_n$ is a reduced expression then $q(\gamma)=q_w$. The \textit{thickness of a pair $x,y$} of simplices is defined to be
\begin{align*}
q(x,y)=\prod_{m\in \mathcal{M}_{\mathbb{A}}(x,y)} q(m),
\end{align*}
where $\mathbb{A}$ is an apartment containing $x$ and $y$, and $\mathcal{M}_{\mathbb{A}}(x,y)$ denotes the set of walls of $\mathbb{A}$ separating $x$ and $y$. This value is independent of the particular apartment~$\mathbb{A}$ containing~$x$ and~$y$ that we choose (this can be easily proven by considering the ``convex hull'' of the pair $x,y$, and noting that this convex hull is contained in every apartment that contains both~$x$ and~$y$). If~$x$ and~$y$ 
are singleton sets we write 
$q(x,y)$ instead of $q(\{x\},\{y\})$.  

%
Every isometry 
of~$X$ 
is semisimple 
by~\cite[Theorem~A]{semisimpl(polyh-Isom)}. 
In other words, 
for every isometry, 
$g$ say, 
of~$X$,  
the infimum 
in the following definition of 
the translation length 
of~$g$, 
written~$|g|$,  
is attained 
at some point 
of~$X$. 
\begin{equation}\label{eq:displacement-function}
|g|:=\inf\{d(x,g.x)\colon x\in X\}
\end{equation}
The isometry 
induced by $g\in G$ 
is elliptic 
if and only if  
the scale of~$g$ is~$1$ 
and 
is hyperbolic otherwise 
by \cite[Theorem~7 and Corollary~10]{flatrk(AutGs(buildings))}.

\begin{definition}
Let~$g$ 
be an 
an isometry 
of ~$X$. 
The \emph{minimal set\/}  
$\Min(g)$, 
of~$g$  
is 
the subset 
of~$X$ 
defined by 
\begin{equation*}
\Min(g):= \{x\in X\colon d(x,g.x)=|g|\}
\end{equation*} 
\end{definition}

The set
$\Min(g)$ is 
closed, convex, and
non-empty. 
%
The set $\Min(g)$ 
is 
the set of fixed points 
of~$g$ 
if~$g$ 
is elliptic 
and 
equals 
the union of 
the 
axes 
of~$g$ 
if~$g$ is hyperbolic. 

\section{Existence of a simplex with tidy stabilizer}\label{sec:Ex(tidy-simplex-Stab)}

For the computation of 
the scale of 
an element 
in~$G$ 
we need 
a generalization of 
Proposition~4 in~\cite{flatrk(AutGs(buildings))}. 
The following lemma 
is needed 
in its proof. 

\begin{lemma}\label{lem:Stab-transitive(pairs(cells)_givenWdist)}
Suppose that 
the action of a group $G$ 
on a building 
with $W$-distance $\delta$ 
is Weyl-transitive and type-preserving 
and 
let~$a$ and~$b$ be 
two simplexes  
whose residues 
are~$\mathcal{A}$ and~$\mathcal{B}$ 
respectively. 
Then
\begin{enumerate}
\item 
the subgroup~$G_b$ 
acts transitively on 
those simplices 
of the same type 
as~$a$, 
whose residues~$\mathcal{R}$  
satisfy 
$\min\bigl(\delta(\mathcal{B},\mathcal{R})\bigr)=\min\bigl(\delta(\mathcal{B},\mathcal{A})\bigr)$; 
\item 
the subgroup~$G_a\cap G_b$ 
acts transitively 
on~$\operatorname{proj}_\mathcal{A}(\mathcal{B})$, 
$\operatorname{proj}_\mathcal{B}(\mathcal{A})$ 
and 
the ordered pairs 
of chambers 
from these sets 
at Weyl-distance~$\min\bigl(\delta(\mathcal{A},\mathcal{B})\bigr)$. 
\end{enumerate}
\end{lemma}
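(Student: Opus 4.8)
The plan is to reduce everything to the standard Weyl-transitivity hypothesis on chambers, via the properties of projections recorded in \cite[Section~5.3.2]{buildings_A&B}. Write $w_0=\min(\delta(\mathcal{A},\mathcal{B}))$ for the minimal length element, so that $\operatorname{proj}_{\mathcal{A}}(\mathcal{B})$ and $\operatorname{proj}_{\mathcal{B}}(\mathcal{A})$ are residues (of cotype determined by $w_0$ and the types of $a,b$) and $\operatorname{proj}_{\mathcal{A}}(\mathcal{B})\to\operatorname{proj}_{\mathcal{B}}(\mathcal{A})$, $c\mapsto\operatorname{proj}_{\mathcal{B}}(c)$, is a Weyl-distance preserving bijection with inverse $c'\mapsto\operatorname{proj}_{\mathcal{A}}(c')$, all pairs $(c,\operatorname{proj}_{\mathcal{B}}(c))$ being at Weyl-distance $w_0$. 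I expect part (1) to be the heart of the argument and part (2) to follow by a similar-but-easier application of the chamber hypothesis once (1) is in place.

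For part (1): let $\mathcal{R}$ be a residue of the same type as $\mathcal{A}$ with $\min(\delta(\mathcal{B},\mathcal{R}))=w_0$. Pick a chamber $c\in\mathcal{A}$ and a chamber $c'\in\mathcal{R}$ realizing the respective minimal gallery distances to $\mathcal{B}$; concretely, let $x_b$ be any chamber of $\mathcal{B}$ and take $c=\operatorname{proj}_{\mathcal{A}}(x_b)$, and choose $x_b'$ in $\mathcal{B}$ with $\delta(x_b',\operatorname{proj}_{\mathcal{R}}(x_b'))=w_0$. The key point is that the triple $(x_b, c)$ — a chamber of $\mathcal{B}$ together with a chamber of $\mathcal{A}$ at Weyl-distance $w_0$ whose $\mathcal{A}$-residue is of the given type — has the same Weyl-distance data as $(x_b', c')$. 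I would first move $\mathcal{B}$ to itself appropriately: since $G_b$ acts transitively on $\mathcal{B}=R_{J_b}(x_b)$ (this itself uses Weyl-transitivity on chambers, applied inside the residue), I can assume $x_b=x_b'$. Then $\delta(x_b,c)=\delta(x_b,c')=w_0$, so by Weyl-transitivity on chambers there is $g\in G$ with $g.x_b=x_b$ and $g.c=c'$; such a $g$ fixes $x_b$, hence stabilizes... one must check it stabilizes $b$, not merely $x_b$. This is the delicate part: fixing one chamber of $\mathcal{B}$ need not fix $b$, so instead I should argue that $g.\mathcal{B}$ is a residue of the same type as $\mathcal{B}$ containing $x_b$ and at Weyl-distance $w_0$ from $\mathcal{R}=g.\mathcal{A}$... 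This circularity means the cleaner route is to set up part (1) and part (2) simultaneously by an induction on $\ell(w_0)$, peeling off one generator at a time and using building axiom (2) together with the gate property $\delta(y,z)=\delta(y,\operatorname{proj}_{\mathcal{R}}(z))\,\delta(\operatorname{proj}_{\mathcal{R}}(z),z)$ to control how residues and their projections move.

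The cleanest strategy, which I would actually carry out: prove the statement first in the base case $w_0=1$, where $\mathcal{A}$ and $\mathcal{B}$ meet, i.e. $b$ is a face of some chamber in $\mathcal{A}$ and the relevant projections are just $\mathcal{A}$ itself inside $\mathcal{B}$'s residue and vice versa; here transitivity of $G_a\cap G_b$ on the relevant chambers is a direct application of Weyl-transitivity within the (spherical) residue $R_{J_a\cup J_b}$. For the inductive step, write $w_0 = s u$ with $\ell(w_0)=1+\ell(u)$, replace $\mathcal{A}$ by the residue $\mathcal{A}'=\operatorname{proj}_{\mathcal{A}}$ of the $s$-panel closer to $\mathcal{B}$ (reducing the minimal distance to $\mathcal{B}$ to $u$), apply the inductive hypothesis to the pair $(\mathcal{A}',\mathcal{B})$, and then lift the resulting group elements back using the fact that the $s$-adjacent residues to $\mathcal{A}'$ of type $J_a$ are permuted by $G_{\mathcal{A}'}$ via Weyl-transitivity on the panel. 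Throughout, part (2) is the shadow of part (1): the bijection $\operatorname{proj}_{\mathcal{B}}$ between $\operatorname{proj}_{\mathcal{A}}(\mathcal{B})$ and $\operatorname{proj}_{\mathcal{B}}(\mathcal{A})$ is $G_a\cap G_b$-equivariant, so transitivity on one set plus transitivity on pairs at Weyl-distance $w_0$ follows once we know $G_a\cap G_b$ is large enough, which the induction delivers. The main obstacle, as flagged, is bookkeeping the cotypes of the projection residues and verifying at each inductive step that the group elements produced genuinely stabilize the simplices $a$ and $b$ (not just individual chambers); I would handle this by consistently phrasing transitivity statements at the level of residues-with-cotype and invoking transitivity on panels to pass between a residue and its chambers.
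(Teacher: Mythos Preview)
Your initial direct approach was correct, and you abandoned it for a phantom obstacle. The ``delicate part'' you flagged is not delicate at all: if $g$ is type-preserving and $g.c=c'$ with $c,c'\in\mathcal{B}$, then $g.\mathcal{B}$ is the (unique) residue of cotype $J_b$ containing $c'$, hence $g.\mathcal{B}=\mathcal{B}$ and $g\in G_b$. In your own setup you even had $g.x_b=x_b$, so $b$ --- a face of the chamber $x_b$ --- is certainly fixed; you began to write exactly the right sentence (``$g.\mathcal{B}$ is a residue of the same type as $\mathcal{B}$ containing $x_b$'') and then talked yourself out of it.

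The paper's proof exploits precisely this observation and is entirely direct --- no induction on $\ell(w_0)$ and no preliminary reduction to $x_b=x_b'$. For~(1): pick $c\in\operatorname{proj}_{\mathcal{B}}(\mathcal{A})$, set $d=\operatorname{proj}_{\mathcal{A}}(c)$, and analogously $c'\in\operatorname{proj}_{\mathcal{B}}(\mathcal{A}')$, $d'=\operatorname{proj}_{\mathcal{A}'}(c')$; then $\delta(c,d)=\delta(c',d')=\min\bigl(\delta(\mathcal{B},\mathcal{A})\bigr)$ and Weyl-transitivity on chambers yields $g$ with $g.(c,d)=(c',d')$. Since $c,c'\in\mathcal{B}$, type-preservation forces $g\in G_b$; since $d\in\mathcal{A}$, $d'\in\mathcal{A}'$ and the two residues have the same cotype, $g.\mathcal{A}=\mathcal{A}'$, i.e.\ $g.a=a'$. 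Part~(2) is the same trick: for a pair $(c,d)\in\operatorname{proj}_{\mathcal{A}}(\mathcal{B})\times\operatorname{proj}_{\mathcal{B}}(\mathcal{A})$ at Weyl-distance $\min\bigl(\delta(\mathcal{A},\mathcal{B})\bigr)$ one has $c\in\mathcal{A}$ and $d\in\mathcal{B}$, so any $g$ carrying one such pair to another automatically lies in $G_a\cap G_b$.

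Your inductive scheme could probably be pushed through, but it is unnecessary machinery for what is, once the type-preservation point is recognised, a two-line argument.
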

\begin{proof}
We will use 
standard properties 
of projections 
between residues, 
compare for example 
section~5.3 in~\cite{buildings_A&B}. 
We 
first 
prove~(1). 
Let~$a'$ be 
a simplex  
fitting the description 
given 
in the statement 
of our claim 
and 
let~$\mathcal{A}'$ be 
the corresponding residue. 
Denote by~$c'$ 
some chamber 
in~$\operatorname{proj}_\mathcal{B}(\mathcal{A}')$ 
and 
let~$d':=\operatorname{proj}_{\mathcal{A}'}(c')$. 
Likewise 
choose 
a chamber, 
$c$ say,  
in~$\operatorname{proj}_\mathcal{B}(\mathcal{A})$ 
and 
let~$d:=\operatorname{proj}_{\mathcal{A}}(c)$. 
By our assumption 
on~$a'$ 
we have  $\delta(c,d)=\delta(c',d')=\min\bigl(\delta(\mathcal{B},\mathcal{A})\bigr)$. 
By Weyl-transitivity, 
$(c,d)$ 
can 
be mapped 
to~$(c',d')$ 
by some group element, 
which 
necessarily 
belongs to~$G_b$, 
proving claim~(1).

We 
next 
prove~(2). 
Since 
the last statement 
clearly 
implies 
the others, 
we 
restrict ourselves to 
proving it. 
To that end, 
choose two 
ordered pairs 
of chambers 
in~$\operatorname{proj}_\mathcal{A}(\mathcal{B})\times \operatorname{proj}_\mathcal{B}(\mathcal{A})$. 
The Weyl-distance 
between 
the first and the second element 
of the pair 
is~$\min\bigl(\delta(\mathcal{A},\mathcal{B})\bigr)$ 
for both pairs. 
By Weyl-transitivity, 
the pairs 
can 
thus 
be mapped 
to each other 
by some group element, 
which 
necessarily 
belongs to~$G_a\cap G_b$, 
proving the claim. 
The proof is complete. 
\end{proof}

Next 
we define 
a description of 
the relative position 
of two simplices 
with respect to 
the walls in the building 
which will play 
an important role 
in what follows. 

\begin{definition}
Let~$b$ and~$a$ be 
two 
(not necessarily distinct) 
simplices 
in a building 
with corresponding residues~$\mathcal{B}$ and~$\mathcal{A}$. 
We say 
\begin{enumerate}
\item  
the ordered pair~$(b,a)$ is 
\emph{aligned\/} 
if and only if 
$\operatorname{proj}_\mathcal{B}(\mathcal{A})=\mathcal{B}$. 
\item 
$a$ and~$b$ are 
\emph{aligned\/} 
if and only if 
$\operatorname{proj}_\mathcal{B}(\mathcal{A})=\mathcal{B}$ 
and 
$\operatorname{proj}_\mathcal{A}(\mathcal{B})=\mathcal{A}$. 
\end{enumerate}
\end{definition}

The pair~$(b,a)$ 
is aligned 
if and only if 
every wall 
that contains~$b$ 
and 
belongs to 
an apartment 
that contains 
both~$b$ and~$a$ 
does also contain~$a$. 
Therefore~$a$ and~$b$ 
are aligned 
if and only if 
every wall 
that belongs to 
an apartment 
that contains 
both~$a$ and~$b$ 
and contains 
either of these simplices 
also contains the other. 

%
The following proposition 
provides a way 
to measure 
the displacement 
between 
stabilizers of simplices 
with respect to~$\mathbf{d}$ 
defined in Lemma~\ref{lem:metricCOS}. 
It will be used 
to provide a formula 
for the scale of isometries.

\begin{prop}
\label{prop:delta-2-transitive=>distance-formula(cell_stab)}
Suppose that 
the action of a group $G$ 
on a building 
is Weyl-transitive and type-preserving. 
Let $a$ and~$b$ be 
two simplices 
with residues~$\mathcal{A}$ 
respectively~$\mathcal{B}$.  
Then 
\begin{equation}\label{eq:delta-2-transitive=>distance-formula(cell_stab)}
|G_b\colon G_b \cap G_{a}|= |G_b.\operatorname{proj}_\mathcal{B}(\mathcal{A})|\cdot q(b,a)\,.
\end{equation}
\end{prop}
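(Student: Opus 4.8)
The plan is to compute the index $|G_b\colon G_b\cap G_a|$ by analysing the action of $G_b$ on the coset space $G_b/(G_b\cap G_a)$, which is naturally identified with the orbit $G_b.a$ of the simplex $a$ under $G_b$. So the first step is to observe that $|G_b\colon G_b\cap G_a|=|G_b.a|$, and then to stratify this orbit. Every simplex $a'\in G_b.a$ has the same type as $a$ and (since $G_b$ fixes $b$, hence $\mathcal B$, and preserves Weyl distances) satisfies $\min\delta(\mathcal B,\mathcal A')=\min\delta(\mathcal B,\mathcal A)$; conversely, by Lemma~\ref{lem:Stab-transitive(pairs(cells)_givenWdist)}(1), every such simplex lies in $G_b.a$. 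Thus $G_b.a$ is exactly the set of simplices of type $a$ whose residue $\mathcal R$ has $\min\delta(\mathcal B,\mathcal R)=\min\delta(\mathcal B,\mathcal A)$.

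Next I would set up the fibration of this orbit over $G_b.\operatorname{proj}_{\mathcal B}(\mathcal A)$. The map $a'\mapsto \operatorname{proj}_{\mathcal B}(\mathcal A')$ (more precisely, the projection of $\mathcal A'$ into the residue $\mathcal B$, viewed as a simplex) is $G_b$-equivariant, so $G_b.a$ decomposes into fibres indexed by $G_b.\operatorname{proj}_{\mathcal B}(\mathcal A)$, each fibre having the same cardinality. Hence $|G_b.a| = |G_b.\operatorname{proj}_{\mathcal B}(\mathcal A)|\cdot F$, where $F$ is the number of simplices $a'$ of type $a$, at the prescribed minimal Weyl distance from $\mathcal B$, whose projection into $\mathcal B$ equals a fixed chamber — say $c\in\operatorname{proj}_{\mathcal B}(\mathcal A)$. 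By transitivity of $G_b$ on $\operatorname{proj}_{\mathcal B}(\mathcal A)$ (Lemma~\ref{lem:Stab-transitive(pairs(cells)_givenWdist)}(1), or its first consequence in part (2)) all fibres genuinely have the same size, so this reduction is legitimate.

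The heart of the argument is then the identification $F = q(b,a)$. Fix the chamber $c\in\mathcal B$ as above. Counting the simplices $a'$ of type $a$ with $\operatorname{proj}_{\mathcal B}(\mathcal A')=c$ and $\min\delta(c,\mathcal A')=\min\delta(\mathcal B,\mathcal A)=:w_0$ amounts to counting, in the building rooted at $c$, the chambers $d'$ with $\delta(c,d')=w_0$ that serve as $\operatorname{proj}_{\mathcal A'}(c)$ for such an $a'$, together with the number of type-$a$ simplices sitting above a given such $d'$ in the right position; since $c=\operatorname{proj}_{\mathcal B}(\mathcal A')$ forces $d'$ to determine $\mathcal A'$ appropriately, this collapses to counting chambers $d'$ with $\delta(c,d')=w_0$ lying in the correct residues. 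The number of minimal galleries of type a reduced word for $w_0$ emanating from $c$ is $q_{w_0}$, and one checks, via the apartment containing $b$ and $a$ and the wall-description of alignment given just before the proposition, that the set of walls separating $b$ and $a$ is precisely the set of walls crossed by such a minimal gallery from $\operatorname{proj}_{\mathcal B}(\mathcal A)$ to $\operatorname{proj}_{\mathcal A}(\mathcal B)$ — so $q(b,a)=q_{w_0}$ — and that distinct such galleries land on distinct valid $a'$ while every valid $a'$ is reached (this last bijectivity uses Lemma~\ref{lem:Stab-transitive(pairs(cells)_givenWdist)}(2), which makes $G_a\cap G_b$ act transitively on the relevant pairs of chambers). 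Assembling the three steps gives $|G_b\colon G_b\cap G_a| = |G_b.\operatorname{proj}_{\mathcal B}(\mathcal A)|\cdot q(b,a)$.

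The main obstacle I anticipate is the bookkeeping in the fibre count $F=q(b,a)$: one must be careful that ``projection of $\mathcal A'$ into $\mathcal B$ equals $c$'' together with the minimal-distance condition really does pin down $\mathcal A'$ from the terminal chamber $d'$ of the gallery, and that the walls separating the pair $(b,a)$ coincide with the walls crossed between $\operatorname{proj}_{\mathcal B}(\mathcal A)$ and $\operatorname{proj}_{\mathcal A}(\mathcal B)$ rather than between $b$ and $a$ as full simplices — the wall-characterisation of alignment stated after the definition, together with Lemma~\ref{lem:separating_walls-compatibility}, is exactly what reconciles these. Everything else (equivariance of the projection map, equality of fibre sizes, $q_{w_0}$ counting minimal galleries) is standard building combinatorics from Section~5.3 of~\cite{buildings_A&B}.
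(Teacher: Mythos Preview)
Your approach is essentially the paper's: orbit--stabilizer gives $|G_b:G_b\cap G_a|=|G_b.a|$, Lemma~\ref{lem:Stab-transitive(pairs(cells)_givenWdist)}(1) identifies this orbit, and the count proceeds by picking one chamber per element of $G_b.\operatorname{proj}_{\mathcal B}(\mathcal A)$ and enumerating reduced galleries of type $\min\delta(\mathcal B,\mathcal A)$ from it (the paper does this by choosing a transversal $\mathcal S$ of chambers and noting that each $a'\in G_b.a$ is reached by a \emph{unique} such gallery). Two small points: your phrase ``projection into $\mathcal B$ equals a fixed chamber $c$'' is imprecise, since $\operatorname{proj}_{\mathcal B}(\mathcal A')$ is in general a sub-residue rather than a single chamber---what you want is to fix the sub-residue and then pick one chamber $c$ in it; and Lemma~\ref{lem:Stab-transitive(pairs(cells)_givenWdist)}(2) is not actually needed for the bijectivity, which follows from uniqueness of $\operatorname{proj}_{\mathcal A'}(c)$ alone.
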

\begin{proof}
Using 
part~(1) of~Lemma~\ref{lem:Stab-transitive(pairs(cells)_givenWdist)} 
and 
the orbit--stabilizer theorem, 
we see 
that 
$|G_b\colon G_b \cap G_{a}|$ 
is equal to 
the number of 
simplices 
of the same type 
as~$a$, 
whose residues~$\mathcal{R}$  
satisfy 
$\min\bigl(\delta(\mathcal{B},\mathcal{R})\bigr)=\min\bigl(\delta(\mathcal{B},\mathcal{A})\bigr)$.  
In order to count these, 
choose 
a fixed, reduced decomposition 
of~$\min\bigl(\delta(\mathcal{B},\mathcal{A})\bigr)$ 
and 
a set of 
chambers, 
$\mathcal{S}$ say, 
representing~$G_b.\operatorname{proj}_\mathcal{B}(\mathcal{A})$. 

The residue of 
each cell 
in~$G_b.a$ 
can be reached 
using 
a unique path 
leaving~$\mathcal{B}$ 
following 
a gallery 
of the chosen type 
that 
starts with 
a chamber 
in~$\mathcal{S}$.  
The number of 
these galleries 
equals~$|\mathcal{S}|\cdot q(\gamma)$,  
where~$\gamma$ 
is 
some gallery 
of the chosen type.  
Since 
the walls crossed 
by 
a gallery 
of the chosen type 
that ends in~$\mathcal{A}$ 
are precisely those 
that 
separate~$a$ from~$b$, 
the statement claimed 
is verified. 
\end{proof}

\begin{remark}\label{rem:compute-additional-factor_|G_b.a|}
~\\[-\baselineskip]
\begin{enumerate}
\item 
By Lemma~\ref{lem:Stab-transitive(pairs(cells)_givenWdist)} 
$|G_b.\operatorname{proj}_\mathcal{B}(\mathcal{A})|=|\mathcal{B}|/|\operatorname{proj}_\mathcal{B}(\mathcal{A})|$ 
if 
$(\Delta, \delta)$ 
is 
locally finite. 
\item 
The first factor 
on the right hand side 
of equation~(\ref{eq:delta-2-transitive=>distance-formula(cell_stab)})  
in the conclusion 
of Proposition~\ref{prop:delta-2-transitive=>distance-formula(cell_stab)} 
simplifies to~$1$ 
if and only if 
the pair~$(b,a)$ 
is aligned. 
\item 
An ordered pair 
beginning with 
a chamber 
is always aligned. 
\end{enumerate}
\end{remark}

For 
a simplex 
whose stabilizer 
is tidy 
for an element,  
the mutual position of 
the simplex 
and 
its image 
under that element 
is special. 
As a first step 
to understanding 
the relationship 
of these simplices 
we note 
the following.  

\begin{lemma}\label{lem:tidycellorbits=aligned}
Let~$G$ be 
a closed, Weyl transitive subgroup 
of the group of 
type preserving automorphisms of 
a locally finite building.  
If the stabilizer of a simplex~$a$ is tidy for $g\in G$, 
then the pairs $(g^n.a,g^m.a)$ 
are aligned 
for all~$n, m\in\mathbb{Z}$. 
\end{lemma}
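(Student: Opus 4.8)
The plan is to exploit the tidiness criterion of \lemref{lem:tidyness-crit_powers} together with the multiplicativity of the index formula in \proref{prop:delta-2-transitive=>distance-formula(cell_stab)}. Write $U=G_a$ and consider the chain of subgroups $U, U\cap g^{-1}Ug, U\cap g^{-2}Ug^2,\ldots$. Since $g^{-n}Ug^n=G_{g^{-n}.a}$, we have $U\cap g^{-n}Ug^n = G_a\cap G_{g^{-n}.a}$, so by \proref{prop:delta-2-transitive=>distance-formula(cell_stab)} applied with $b=a$ and the simplex $g^{-n}.a$ in place of $a$,
\[
|U\colon U\cap g^{-n}Ug^n| = |G_a.\operatorname{proj}_{\mathcal{A}}(\mathcal{A}_n)|\cdot q(a,g^{-n}.a),
\]
where $\mathcal{A}_n$ is the residue of $g^{-n}.a$. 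By \lemref{lem:tidyness-crit_powers}, tidiness of $U$ for $g$ forces the left-hand side to equal $|U\colon U\cap g^{-1}Ug|^n$, which by the $n=1$ case equals $\bigl(|G_a.\operatorname{proj}_{\mathcal{A}}(\mathcal{A}_1)|\cdot q(a,g^{-1}.a)\bigr)^n$.

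The key step is to show that the thickness factors alone are already multiplicative, i.e. $q(a,g^{-n}.a) \le q(a,g^{-1}.a)^n$, with equality forcing the orbit factors $|G_a.\operatorname{proj}_{\mathcal{A}}(\mathcal{A}_n)|$ to be trivial. For this I would argue geometrically in the Davis realisation: a minimal gallery from the residue $\mathcal{B}=\mathcal{A}$ to $\mathcal{A}_n$ can be built by concatenating segments, and the walls separating $a$ from $g^{-n}.a$ are a subset of the union of the walls separating $g^{-j}.a$ from $g^{-j-1}.a$ for $j=0,\ldots,n-1$ (using that $g^{-j}$ is an isometry, so $q(g^{-j}.a, g^{-j-1}.a) = q(a, g^{-1}.a)$). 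Hence $q(a,g^{-n}.a)$ divides $q(a,g^{-1}.a)^n$ and is at most that value, with equality iff no wall is ``crossed and re-crossed'', i.e. iff the $g^{-j}.a$ lie on a common convex path. Combined with the forced equality above, this pins down $|G_a.\operatorname{proj}_{\mathcal{A}}(\mathcal{A}_n)|=1$ for all $n\ge 0$, which by Remark~\ref{rem:compute-additional-factor_|G_b.a|}(2) says exactly that $(a, g^{-n}.a)$ is aligned for all $n\ge 0$. Applying the same argument to $g^{-1}$ in place of $g$ gives alignment of $(a, g^{n}.a)$ for all $n\ge 0$ as well. Finally, since $g^m$ is an automorphism of the building, $(g^n.a, g^m.a)$ is aligned if and only if $(a, g^{m-n}.a)$ is aligned (projections are equivariant), so alignment for all $n,m\in\mathbb{Z}$ follows; note that alignment of the ordered pair $(b,a)$ in the sense needed here is the symmetric notion, which one obtains by running the argument with the roles of the two simplices — or equivalently $g$ and $g^{-1}$ — interchanged.

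The main obstacle I anticipate is the combinatorial bookkeeping in the thickness inequality: one must be careful that the wall set $\mathcal{M}_{\mathbb{A}}(a, g^{-n}.a)$ is genuinely contained in $\bigcup_{j=0}^{n-1}\mathcal{M}_{\mathbb{A}_j}(g^{-j}.a, g^{-j-1}.a)$ when the apartments $\mathbb{A}_j$ may differ, and that the thickness of a wall is apartment-independent (which is already noted on page~\pageref{page:well-def(wall-thickness)}). This is most cleanly handled by passing to the convex hull of the finite chamber set $\{a, g^{-1}.a,\ldots,g^{-n}.a\}$ (more precisely, of representative chambers), which lies in a single apartment, and reading all wall-separation statements there; then the inequality and its equality case reduce to the triangle-inequality-type statement for wall sets together with the observation that equality in the division relation is equivalent to $\mathrm{proj}$ respecting the nested structure, i.e. to alignment.
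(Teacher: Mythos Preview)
Your approach is essentially the paper's: combine M\"oller's criterion, the index formula of \proref{prop:delta-2-transitive=>distance-formula(cell_stab)}, the submultiplicativity $q(a,g^{-n}.a)\le q(a,g^{-1}.a)^n$, and the uniform boundedness of the orbit factors (all bounded by the cardinality of the spherical residue~$\mathcal{A}$). The cosmetic differences---you apply \proref{prop:delta-2-transitive=>distance-formula(cell_stab)} with $b=a$ rather than $b=g^l.a$, and you pass to the general pair via equivariance of projections rather than by applying $g^{n-l}$---are immaterial.

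There is, however, a small gap. Writing $k_n=|G_a.\operatorname{proj}_{\mathcal{A}}(\mathcal{A}_n)|$ and $q_n=q(a,g^{-n}.a)$, tidiness gives $k_nq_n=(k_1q_1)^n$, and together with $q_n\le q_1^n$ this yields $k_n\ge k_1^n$; boundedness of the $k_n$ then forces $k_1=1$. But that alone does \emph{not} give $k_n=1$ for all $n$: it only gives $k_n=q_1^n/q_n\ge 1$. Your sentence ``with equality forcing the orbit factors \ldots\ to be trivial'' presumes equality $q_n=q_1^n$, which has not been established. The paper closes this gap by noting at the outset that $G_a$ is tidy for \emph{every} power $g^l$, and then running the identical argument with $g$ replaced by $g^l$ for each fixed $l\in\mathbb{Z}$: boundedness of $k_{nl}$ as $n\to\infty$ forces $k_l=1$. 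Your instinct to ``apply the same argument to $g^{-1}$'' is right, but you need it for all $g^l$, not just $l=\pm1$. With that one-line amendment your proof is complete.
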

\begin{proof}
Since $G_a$ is tidy 
for~$g$, 
it is also tidy for 
$g^l$ for each $l\in\mathbb{Z}$.
 Using Proposition~\ref{prop:delta-2-transitive=>distance-formula(cell_stab)} 
we have 
for 
fixed~$l\in\mathbb{Z}$ 
and every $n\in\mathbb{N}$ 
\begin{equation}
k_{a,nl}\cdot q(g^{nl}.a,a)
=|G_{g^{nl}.a}\colon G_{g^{nl}.a} \cap G_{a}|
= |G_{g^l.a}\colon G_{g^l.a} \cap G_{a}|^n
= \bigl(k_{a,l}\cdot q(g^l.a,a)\bigr)^n
\end{equation}
for some integers $k_{a,j}$ 
that are bounded above. 
Furthermore 
\begin{equation}
q(g^{nl}.a,a) \le \bigl(q(g^l.a,a)\bigr)^n\quad \text{for all }n\in\mathbb{N}
\end{equation}
which implies 
\begin{equation}
k_{a,nl}= k_{a,l}^n\,\frac{\bigl(q(g^l.a,a)\bigr)^n}{q(g^{nl}.a,a)}\ge k_{a,l}^n\quad \text{for all }n\in\mathbb{N}
\end{equation}
Thus $k_{a,l}=1$ for all $l\in\mathbb{Z}$, 
which means that 
the pairs~$(g^l.a,a)$ 
are aligned 
for each integer~$l$. 
Applying the automorphism~$g^{n-l}$ 
we see that the pairs~$(g^n.a,g^{n-l}.a)$ 
are aligned 
for each integer~$l$. 
Putting $m:=n-l$ 
we obtain 
our claim. 
\end{proof}

Elliptic elements 
may fix vertices 
whose stabiliser 
is then 
a tidy subgroup 
for the element. 
This can not happen 
for hyperbolic elements. 

\begin{corollary}\label{cor:Stab(vertex)=tidy=>isom=elliptic}
Let~$G$ be 
a closed, Weyl-transitive subgroup 
of the group of 
type-preserving automorphisms of 
a locally finite building.  
Then 
the stabiliser 
of a vertex 
is not tidy 
for any hyperbolic element 
of~$G$. 
\end{corollary}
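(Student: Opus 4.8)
\emph{The plan.} I would argue by contradiction: suppose the stabiliser $G_a$ of a vertex $a$ is tidy for a hyperbolic element $g\in G$, and deduce that $g$ fixes the point $a$, which is impossible for a hyperbolic isometry.

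First I would apply Lemma~\ref{lem:tidycellorbits=aligned} with the simplex taken to be the vertex $a$: since $G_a$ is tidy for $g$, the pairs $(g^{n}.a,g^{m}.a)$ are aligned for all $n,m\in\mathbb{Z}$, and in particular the pair $(a,g.a)$ is aligned. By the reformulation of alignment in terms of walls recorded just after the definition, this means that every wall lying in an apartment that contains both $a$ and $g.a$ and passing through $a$ must also pass through $g.a$.

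The crux is the geometric fact that a vertex of the Davis realisation is the intersection of the walls through it. A vertex corresponds to a maximal spherical residue, whose type $J$ is nonempty, since a chamber --- being contained in the rank-one spherical residue of each of its cotypes --- is never a maximal spherical residue. Fixing an apartment $\mathbb{A}$ containing $a$ and $g.a$, the walls of $\mathbb{A}$ through $a$ are the fixed-point sets of the reflections of $\mathbb{A}$ fixing $a$, and these reflections generate a finite group $W'$ conjugate to $W_J$; the fixed-point set of $W'$ equals $\{a\}$ (identifying $a$ with its realisation) because $W'$ fixes $a$ and acts on the link of $a$ --- a round sphere $S^{|J|-1}$ --- through its reflection representation, which has no nonzero fixed vector. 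Hence the intersection of all walls of $\mathbb{A}$ through $a$ is $\{a\}$.

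Combining the two steps, alignment of $(a,g.a)$ forces $g.a$ to lie on every wall of $\mathbb{A}$ through $a$, so $g.a\in\{a\}$; that is, the isometry induced by $g$ fixes the point $a\in X$ and is therefore elliptic, contradicting that $g$ is hyperbolic (recall from the discussion before the definition of $\Min$ that this isometry is elliptic exactly when the scale of $g$ equals $1$, and hyperbolic otherwise). The step I expect to require the most care is the geometric fact of the third paragraph: identifying the link of $a$ with $S^{|J|-1}$ and the induced $W'$-action with the reflection representation, and in particular the rank-one case $|J|=1$, where the sphere degenerates to a two-point set and one must check that the single wall through such a vertex collapses to the point. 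This fact is standard for Davis realisations and could be quoted rather than reproved.
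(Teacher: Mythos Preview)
Your proposal is correct and follows essentially the same route as the paper: apply Lemma~\ref{lem:tidycellorbits=aligned} to deduce that $(a,g.a)$ is aligned, and then use that two aligned vertices must coincide, forcing $g.a=a$ and hence $g$ elliptic. The only difference is that the paper simply asserts ``a pair of two vertices that are aligned are equal'' without further comment, whereas you supply a geometric justification via the Davis realisation (walls through a vertex intersect only in that vertex); your link argument is sound once one notes that $\mathrm{Fix}(W')$ is convex in the $\mathsf{CAT}(0)$ apartment, so the absence of a fixed direction at $a$ forces $\mathrm{Fix}(W')=\{a\}$ globally, and in the $|J|=1$ case the ``wall'' $\mathrm{Fix}(s)$ really is the single point~$a$ since $\{s\}$ maximal spherical makes $W\cong\langle s\rangle * W_{S\setminus\{s\}}$.
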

\begin{proof}
By Lemma~\ref{lem:tidycellorbits=aligned} 
a simplex 
whose stabiliser 
is tidy for an element~$g$ 
must form an aligned pair 
with its image under~$g$. 
Since a pair of two vertices 
that are aligned 
are equal, 
any element 
that has 
the stabiliser of a vertex 
as a tidy subgroup 
must fix that vertex 
and hence is elliptic. 
The claim follows.  
\end{proof}

Our next result 
shows that 
we can always find 
a tidy subgroup 
for any element 
that equals the stabilizer 
of a simplex 
and compute 
the scale of the element 
from the walls separating 
the simplex 
and its image 
under the element 
in question. 

\begin{theorem}\label{thm:everything=straight}
Let~$G$ be 
a closed, Weyl transitive subgroup 
of the group of 
type-preserving automorphisms of 
a locally finite building.  
%
Let~$g$ be 
an element 
of~$G$, 
and $a$ be 
a simplex
which is 
\begin{itemize}
\item[(1)]
the carrier 
of some point in $\Min(g)$  
if~$g$ acts by elliptic isometries;  
%
\item[(2)]
the carrier 
of an open interval 
of an axis 
of~$g$ 
no points 
of which 
are separated by 
a wall 
if~$g$ 
acts by hyperbolic isometries. 
\end{itemize}
Then 
$(g^n.a,a)$ 
is aligned 
for all~$n\in\mathbb{Z}$, 
$G_{a}$ 
is tidy 
for~$g$ 
and 
$s(g)=q(a,g.a)$. 
\end{theorem}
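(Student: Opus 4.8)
The theorem has three conclusions: (a) alignment of the pairs $(g^n.a,a)$ for all $n$, (b) tidiness of $G_a$ for $g$, and (c) the scale formula $s(g)=q(a,g.a)$. The natural order is (a) $\Rightarrow$ (b) $\Rightarrow$ (c), since Proposition~\ref{prop:delta-2-transitive=>distance-formula(cell_stab)} together with M\"oller's criterion (Lemma~\ref{lem:tidyness-crit_powers}) reduces everything to controlling the quantities $|G_a\colon G_a\cap g^{-n}G_ag^n|=|G_{g^n.a}\colon G_{g^n.a}\cap G_a|$, and by Remark~\ref{rem:compute-additional-factor_|G_b.a|}(2) this index equals $q(g^n.a,a)$ precisely when $(g^n.a,a)$ is aligned. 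So the real content is the geometry: showing that $a$, chosen as in (1) or (2), forms aligned pairs with all its $g$-translates, and that the wall-thicknesses multiply, i.e. $q(g^n.a,a)=q(g.a,a)^n$.

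\textbf{Step 1: the geometry of $\Min(g)$ and the set of separating walls.} In the elliptic case, $\Min(g)$ is the fixed-point set $\mathrm{Fix}(g)$, which is closed and convex; I would pick a point $p$ in its relative interior and let $a$ be its carrier. Since $g$ fixes $p$ and acts simplicially, $g$ fixes $a$, so $g^n.a=a$ for all $n$ and alignment and the scale formula ($s(g)=q(a,a)=1$, consistent with ellipticity $\Leftrightarrow$ scale $1$) are immediate; the only thing to check is that $g$ genuinely fixes $a$ and not merely $p$, which follows because automorphisms are type-preserving and the carrier of $p$ is $g$-invariant. In the hyperbolic case, let $\ell$ be an axis of $g$ and choose an open sub-interval $I\subseteq\ell$ that crosses no wall; its carrier $a$ is then a single simplex all of whose realisation-points lie strictly between the same two consecutive walls that $\ell$ meets. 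The key geometric fact I would extract is: \emph{a wall $m$ separates $a$ from $g^n.a$ if and only if $m$ separates $I$ from $g^n.I$ along the axis}, using Lemma~\ref{lem:separating_walls-compatibility} (walls separating points = walls separating carriers). Because $g$ translates along $\ell$ by $|g|>0$ and no wall meets $I$, the set $\mathcal{M}(a,g^n.a)$ is the disjoint union $\bigsqcup_{k=0}^{n-1}g^k\mathcal{M}(a,g.a)$ — the walls crossed on each ``step'' along the axis are pairwise distinct since a wall crossed by a geodesic is crossed exactly once (any geodesic meeting a wall nondegenerately lies in it). This gives $q(g^n.a,a)=\prod_{k=0}^{n-1}q\bigl(g^k.a,g^{k+1}.a\bigr)=q(a,g.a)^n$, since $g$ preserves wall-thickness.

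\textbf{Step 2: alignment.} I would argue that the pair $(g^n.a,a)$ is aligned by the wall-characterisation stated after the alignment definition: $(g^n.a,a)$ is aligned iff every wall through $g^n.a$ lying in an apartment containing both also contains $a$, and symmetrically. A wall through $a$ — i.e. a wall $m$ whose realisation contains $a$, equivalently $\mathrm{proj}$-data — does \emph{not} separate $a$ from anything it also contains, so the walls ``through'' $a$ are exactly those not in $\mathcal{M}(a,g^n.a)$ among... more carefully: I would show that a wall through $a$ contains $I$ only if it contains $\ell$ (a wall meeting a geodesic nondegenerately contains it), hence is $g$-stable, hence contains $g^n.I\subseteq g^n.a$; walls through $a$ that meet $\ell$ only at a point cannot contain the open interval $I$, contradicting ``$a$ in the wall''. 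This forces every wall through $a$ (in a common apartment) to be $g$-invariant and therefore through $g^n.a$ too, giving alignment of $(g^n.a,a)$; symmetry handles $(a,g^n.a)$, and applying $g^{m}$ gives all pairs $(g^n.a,g^m.a)$ aligned — in fact Lemma~\ref{lem:tidycellorbits=aligned} would follow a posteriori, but we prove alignment directly here since we do not yet know $G_a$ is tidy.

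\textbf{Step 3: tidiness and the scale.} Combining Steps 1–2 with Remark~\ref{rem:compute-additional-factor_|G_b.a|}(2): alignment makes the first factor in \eqref{eq:delta-2-transitive=>distance-formula(cell_stab)} equal to $1$, so
\[
|G_a\colon G_a\cap g^{-n}G_a g^n|=|G_{g^n.a}\colon G_{g^n.a}\cap G_a|=q(g^n.a,a)=q(a,g.a)^n=|G_a\colon G_a\cap g^{-1}G_a g|^n
\]
for all $n\geq 0$, which is exactly M\"oller's criterion (Lemma~\ref{lem:tidyness-crit_powers}), so $G_a$ is tidy for $g$. Then the scale formula of the unnumbered lemma in \S\ref{sec:tidy&scale} gives $s(g)=|G_a\colon G_a\cap g^{-1}G_a g|=q(g.a,a)=q(a,g.a)$, since $q$ is symmetric. (Note $G_a$ is compact open because $\Delta$ is locally finite and $a$ corresponds to a spherical residue, so it is a legitimate candidate.)

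\textbf{Main obstacle.} The crux is Step 1 in the hyperbolic case: proving that the walls crossed in consecutive unit steps along the axis are genuinely disjoint and that the carrier $I$ of the chosen open interval really does see, via Lemma~\ref{lem:separating_walls-compatibility}, exactly the walls transverse to the axis between its endpoints — i.e. that no ``extra'' walls sneak in that contain $I$ but separate $a$ from $g^n.a$. This rests on the $\mathsf{CAT}(0)$ facts that a geodesic with a nondegenerate segment in a wall lies entirely in the wall, and that walls/halfspaces are convex; marshalling these to pin down $\mathcal{M}(a,g^n.a)$ precisely, and verifying the interval $I$ of Step (2) with the stated property actually exists (the axis is a geodesic line, walls it meets are discrete along it because $\Delta$ is locally finite, so such an $I$ exists between consecutive crossings), is where the care is needed. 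Everything after that is bookkeeping with the orbit–stabiliser count already packaged in Proposition~\ref{prop:delta-2-transitive=>distance-formula(cell_stab)}.
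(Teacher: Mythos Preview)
Your proposal is correct and follows essentially the same route as the paper: dispose of the elliptic case trivially, then in the hyperbolic case use the $\mathsf{CAT}(0)$ fact that a wall containing a nondegenerate segment of the axis contains the whole axis (the paper cites Caprace--Haglund, Lemmata~4.1--4.2; you use the equivalent statement recorded in \S\ref{sec:buildings}) to get alignment, feed this into Proposition~\ref{prop:delta-2-transitive=>distance-formula(cell_stab)} and Remark~\ref{rem:compute-additional-factor_|G_b.a|}(2), verify $q(g^n.a,a)=q(a,g.a)^n$ from the wall-crossing count along the axis, and conclude via M\"oller's criterion. One small wobble: in Step~2 you assert the wall is ``$g$-stable'', which is neither needed nor obviously true---what you actually use (and what suffices) is that the wall contains the entire axis $\ell$, hence contains $g^n.I$, hence (being a subcomplex) contains its carrier $g^n.a$.
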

\begin{proof} 
Since the claims 
are obvious 
if~$g$ 
acts by elliptic isometries, 
we may assume that 
$g$ acts by hyperbolic isometries 
in what follows. 

We begin 
by verifying 
the claim 
on alignment. 
Observe that 
lemmata~4.2 and~4.1 
in~\cite{geoFlats<CAT0-real(CoxGs+Tits-buildings)} 
imply that 
any wall 
in an apartment 
that contains 
a chosen axis 
of~$g$ 
with 
the wall  
containing 
an open interval 
of that axis 
contains said axis 
completely. 
By the defining property 
of~$a$,  
we conclude that 
$g^n.a$ and~$a$ 
are aligned 
for any~$n\in\mathbb{Z}$.

Applying Proposition~\ref{prop:delta-2-transitive=>distance-formula(cell_stab)} 
with~$b=g^n.a$, 
part~(2) of Remark~\ref{rem:compute-additional-factor_|G_b.a|} 
as well as 
Lemma~\ref{lem:separating_walls-compatibility}
we conclude 
that 
for every~$n\in\mathbb{N}$ 
\[
|g^nG_{a}g^{-n}\colon g^nG_{a}g^{-n}\cap G_{a}|=
|G_{g^n.a}\colon G_{g^n.a}\cap G_{a}|=
q(g^n.a,a)
\,.
\]

The positioning 
of~$a$ 
with respect to~$\Min(g)$ 
implies
\[
\forall n\in\mathbb{N}\colon q(a,g^n.a)=q(a,g.a)^n
\quad\text{which implies}
\]
\[
\forall n\in\mathbb{N}\colon 
|g^nG_{a}g^{-n}\colon g^nG_{a}g^{-n}\cap G_{a}|=
|gG_{a}g^{-1}\colon gG_{a}g^{-1}\cap G_{a}|^n\,, 
\]
showing that 
$G_{a}$ is tidy 
for~$g$ 
by Lemma~\ref{lem:tidyness-crit_powers}. 
Since 
this implies 
that 
$s(g)=q(a,g.a)$, 
the proof 
is  complete. 
\end{proof}

The following theorem 
provides 
an alternative description 
of a simplex 
with tidy stabilizer 
that does not mention 
an axis 
for the element 
in question.

\begin{theorem}\label{thm:straight-cell}
Let~$G$ be 
a closed group of Weyl-transitive, type-preserving automorphisms 
a thick, locally finite building. 
Let $g\in G$ 
and 
$a$ be a simplex  
that is aligned with~$g.a$ 
and 
such that 
$q(g.a,a)$ is minimal 
among all simplices 
with the same property. 
Then 
the stabilizer of~$a$ 
is tidy for~$g$ 
and $s(g)=q(g.a,a)$. 
\end{theorem}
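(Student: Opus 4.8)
The plan is to derive this from Theorem~\ref{thm:everything=straight} together with Proposition~\ref{prop:delta-2-transitive=>distance-formula(cell_stab)}. The guiding observation is that, for a simplex $a'$ that is aligned with $g.a'$, the thickness $q(g.a',a')$ coincides with the distortion index of the compact open subgroup $G_{a'}$ under conjugation by $g$. Minimizing $q(g.a',a')$ over all such $a'$ therefore amounts to choosing a simplex stabilizer whose distortion index is as small as possible, and Theorem~\ref{thm:everything=straight} exhibits one for which this distortion index equals $s(g)$; since a compact open subgroup is tidy for $g$ precisely when its distortion index attains $s(g)$, the conclusion follows.

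First I would record the basic identity. If $a'$ is any simplex aligned with $g.a'$, then in particular the ordered pair $(g.a',a')$ is aligned, so by Proposition~\ref{prop:delta-2-transitive=>distance-formula(cell_stab)} applied with $b=g.a'$ and part~(2) of Remark~\ref{rem:compute-additional-factor_|G_b.a|} the first factor on the right-hand side is $1$, and using $G_{g.a'}=gG_{a'}g^{-1}$ one obtains
\[
|gG_{a'}g^{-1}\colon gG_{a'}g^{-1}\cap G_{a'}|=|G_{g.a'}\colon G_{g.a'}\cap G_{a'}|=q(g.a',a')\,.
\]
Because $s(g)$ is, by definition, the minimum of the left-hand side over all compact open subgroups of $G$, and $G_{a'}$ is such a subgroup (stabilizers of simplices in a locally finite building under a closed automorphism group are compact and open), this yields $q(g.a',a')\geq s(g)$ for every simplex $a'$ aligned with $g.a'$.

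Next I would invoke Theorem~\ref{thm:everything=straight}: it produces a simplex $a_0$ --- a carrier of a point in $\Min(g)$ if $g$ is elliptic, or of a suitable open interval of an axis if $g$ is hyperbolic --- such that $(g^n.a_0,a_0)$ is aligned for all $n\in\mathbb{Z}$ (so, using the cases $n=1$ and $n=-1$, the simplices $a_0$ and $g.a_0$ are aligned in the symmetric sense) and $s(g)=q(g.a_0,a_0)$. Hence $a_0$ is admissible in the minimization defining $a$, so $q(g.a,a)\leq q(g.a_0,a_0)=s(g)$; combining this with the inequality of the previous paragraph applied to $a'=a$ forces $q(g.a,a)=s(g)$. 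Finally, the displayed identity with $a'=a$ shows that the distortion index $|gG_ag^{-1}\colon gG_ag^{-1}\cap G_a|$ equals $q(g.a,a)=s(g)$, i.e.\ it attains the minimal possible value; thus $G_a$ is tidy for $g$ and $s(g)=q(g.a,a)$.

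The only place where genuine geometric content enters is the appeal to Theorem~\ref{thm:everything=straight} for the upper bound $q(g.a,a)\leq s(g)$; the remainder of the argument is formal, and it is uniform in the elliptic and hyperbolic cases. The minor points that need care are that ``$a$ aligned with $g.a$'' is the symmetric notion (so that the one-sided condition required for Remark~\ref{rem:compute-additional-factor_|G_b.a|}(2) is available), that $G_a$ is a bona fide compact open subgroup and hence a legitimate candidate for a tidy subgroup, and that the simplex $a_0$ furnished by Theorem~\ref{thm:everything=straight} really does satisfy the alignment hypothesis of the present theorem, which is part of the conclusion of that theorem.
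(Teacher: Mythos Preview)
Your argument is correct and follows the same line as the paper's own proof: use Theorem~\ref{thm:everything=straight} to exhibit an aligned simplex whose stabilizer is tidy (giving the upper bound), and the minimizing characterization of tidiness (giving the lower bound). The paper compresses all of this into two sentences, whereas you have spelled out the key identity $q(g.a',a')=|gG_{a'}g^{-1}\colon gG_{a'}g^{-1}\cap G_{a'}|$ via Proposition~\ref{prop:delta-2-transitive=>distance-formula(cell_stab)} and Remark~\ref{rem:compute-additional-factor_|G_b.a|}(2), and checked that the simplex from Theorem~\ref{thm:everything=straight} satisfies the symmetric alignment hypothesis---details the paper leaves implicit.
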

\begin{proof}
This follows since 
tidy subgroups are minimizing 
and 
we already know 
that 
there is some simplex 
that is aligned with 
its image under~$g$ 
whose stabilizer is 
a tidy subgroup 
for~$g$. 
\end{proof}

Theorem~5.4 in~\cite{geo-char(flatG(auts))} 
follows immediately 
from Theorem~\ref{thm:straight-cell}. 
We have 
therefore found
a proof of that result 
that is 
both 
technically simple 
and  
valid 
in a much 
more general 
context.

\section{Interpretation as straight displacements}\label{sec:straight-displacements}
The aim of 
this section 
is 
to characterize 
a simplex 
whose stabilizer 
is tidy for 
an element 
in an algebraic manner 
in terms of 
the Weyl group 
of the building, 
see Theorem~\ref{thm:cotype-straight_displacement<=>simplex-Stab=tidy}. 
The apt term turns out to be the following. 

\begin{defn}
Let $I\subseteq S$ be a spherical subset. An element $w\in W$ is called \textit{$I$-straight} if $w$ is $(I,I)$-reduced,  
$wIw^{-1}=I$ and $\ell(w^n)=n\,\ell(w)$ for all $n\in\mathbb{N}$. 
\end{defn}

Since  
being straight 
is equivalent to 
being $\varnothing$-straight, 
$I$-straightness 
is a refinement 
of straightness. 
Furthermore, 
because $wIw^{-1}=I$ 
implies that 
conjugation by~$w$ 
fixes~$I$ pointwise, 
being $I$-straight 
implies 
being $J$-straight for each $J\subseteq I$.

We will require 
the following lemma.  
It is 
a special case of 
Theorem~2.1 in~\cite{dist-reg<buildings&struc-const<HeckeA}.

\begin{lemma}\label{lem:count}
Let~$G$ be 
a closed group of Weyl-transitive, type-preserving automorphisms 
a thick, locally finite building. 
Let $I\subseteq S$ be 
spherical 
and 
let $P_I$ denote 
the stabilizer of 
the cotype $I$ simplex 
of the base chamber.  
Then for 
all $(I,I)$-reduced 
elements $w\in W$ 
and $g\in G$
writing 
$Y(q)=\sum_{u\in Y}q_u$ for subsets $Y\subseteq W$ 
we have 
\begin{align*}
\left|\{hP_I\colon \delta(gP_I,hP_I)=w\}\right|=\frac{W_I(q)}{W_{I\cap wIw^{-1}}(q)}\,q_w,
\end{align*}
\end{lemma}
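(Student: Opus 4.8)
The statement is the $J=I$ case of a Hecke-algebra structure-constant identity, so the quickest route is to match notation and quote Theorem~2.1 of~\cite{dist-reg<buildings&struc-const<HeckeA} (equivalently, to compute the truncation $e_IT_we_I$ of the Iwahori--Hecke algebra of the building by the idempotent $e_I=W_I(q)^{-1}\sum_{u\in W_I}T_u$). I would instead prefer to give a self-contained combinatorial double count. First, by Weyl-transitivity $G$ acts transitively on ordered pairs of cotype-$I$ residues with a prescribed Weyl-distance $W_I u W_I$ (pick chambers $c,y$ with $\delta(c,y)=u$ in the first pair and $c',y'$ likewise in the second, and apply Weyl-transitivity to $(c,y)\mapsto(c',y')$; type-preservation then moves the residues correctly). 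Hence the left-hand side is independent of $g$ and equals the number $N$ of cotype-$I$ residues $\mathcal R$ with $\delta(\mathcal R_0,\mathcal R)=W_IwW_I$, for $\mathcal R_0$ a fixed cotype-$I$ residue. On the right, $W_I(q)/W_{I\cap wIw^{-1}}(q)$ is the number of cotype-$(I\cap wIw^{-1})$ residues contained in the spherical building $\mathcal R_0$ (which has $W_I(q)$ chambers, each such subresidue contributing $W_{I\cap wIw^{-1}}(q)$ of them), while $q_w$ is the number of chambers at Weyl-distance $w$ from a given chamber. So the assertion is equivalent to $W_I(q)\,q_w=N\cdot W_{I\cap wIw^{-1}}(q)$.

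To prove this equation I would double-count the set of pairs $(c,y)$ with $c\in\mathcal R_0$ and $\delta(c,y)=w$. Counting over $c$ gives exactly $W_I(q)\,q_w$ such pairs. Counting instead over the residue $\mathcal R:=R_I(y)$: for each such pair, $\delta(\mathcal R_0,\mathcal R)$ is a single $W_I$-double coset containing $w$, hence equals $W_IwW_I$; conversely every residue $\mathcal R$ with $\delta(\mathcal R_0,\mathcal R)=W_IwW_I$ arises from at least one pair (take $c$ in $\mathrm{proj}_{\mathcal R_0}(\mathcal R)$ and $y=\mathrm{proj}_{\mathcal R}(c)$). It therefore remains to show that each such $\mathcal R$ receives exactly $W_{I\cap wIw^{-1}}(q)$ pairs; granting this, equating the two counts gives the displayed identity and hence the lemma.

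The fibre count is the only real content, and I expect it to be the main obstacle; it rests on the standard properties of projections between residues (\cite[Section~5.3]{buildings_A&B}) together with the hypothesis that $w$ is $(I,I)$-reduced, which makes $w$ the unique shortest element of $W_IwW_I$ and in particular $(\varnothing,I)$- and $(I,\varnothing)$-reduced. Fix $\mathcal R$ with $\delta(\mathcal R_0,\mathcal R)=W_IwW_I$ and set $\mathcal Q:=\mathrm{proj}_{\mathcal R_0}(\mathcal R)$, a cotype-$(I\cap wIw^{-1})$ residue of $\mathcal R_0$ (this cotype is precisely the type of the parabolic subgroup $W_I\cap wW_Iw^{-1}$, which is standard here), so $|\mathcal Q|=W_{I\cap wIw^{-1}}(q)$. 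If $c\in\mathcal R_0$, $y\in\mathcal R$ and $\delta(c,y)=w$, then $\mathrm{dist}(c,\mathcal R)\le\ell(w)=\mathrm{dist}(\mathcal R_0,\mathcal R)\le\mathrm{dist}(c,\mathcal R)$, forcing $c\in\mathrm{proj}_{\mathcal R_0}(\mathcal R)=\mathcal Q$. Conversely, for $c\in\mathcal Q$ the gate property gives, for every $y\in\mathcal R$, the factorisation $\delta(c,y)=\delta\bigl(c,\mathrm{proj}_{\mathcal R}(c)\bigr)\,\delta\bigl(\mathrm{proj}_{\mathcal R}(c),y\bigr)$ with lengths adding, where $\delta\bigl(c,\mathrm{proj}_{\mathcal R}(c)\bigr)$ is the shortest element of the coset $\delta(c,\mathcal R)\subseteq W_IwW_I$, namely $w$; hence $\delta(c,y)=w$ forces $y=\mathrm{proj}_{\mathcal R}(c)$. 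Thus the fibre over $\mathcal R$ is in bijection with $\mathcal Q$ via $c\mapsto(c,\mathrm{proj}_{\mathcal R}(c))$, and has size $W_{I\cap wIw^{-1}}(q)$, completing the argument. The local finiteness and thickness hypotheses enter only to guarantee the $q$-weights are finite positive integers.
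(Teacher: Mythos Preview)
Your proposal is correct. The paper itself supplies no argument: it simply records that the lemma is a special case of Theorem~2.1 in~\cite{dist-reg<buildings&struc-const<HeckeA}, precisely the citation you identify in your first sentence. You then go further and give a self-contained double-counting proof, which the paper does not attempt. The argument is sound: the reduction to counting pairs $(c,y)$ with $c\in\mathcal R_0$ and $\delta(c,y)=w$ is clean, and the fibre computation correctly invokes that $\mathrm{proj}_{\mathcal R_0}(\mathcal R)$ is an $(I\cap wIw^{-1})$-residue together with the gate property. The one step a reader might want spelt out is why, for $c\in\mathcal Q$, the minimal element of the right coset $\delta(c,\mathcal R)$ is $w$ itself rather than some other $(\varnothing,I)$-reduced element of $W_IwW_I$; this holds because, $w$ being $(I,I)$-reduced, the $(\varnothing,I)$-reduced elements of $W_IwW_I$ are exactly the $uw$ with $u\in W_I$, of length $\ell(u)+\ell(w)$, so length $\ell(w)$ forces $u=1$. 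Your route has the merit of being elementary and of exhibiting exactly where the $(I,I)$-reducedness and the projection theory are used; the paper's route is merely a pointer to the general $J\neq I$ formula.
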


\begin{remark}
The paper~\cite{dist-reg<buildings&struc-const<HeckeA} 
assumes local finiteness 
and 
regularity 
of the building 
but 
no Weyl-transitive group action. 
Where all assumptions apply  
--- as in our context --- 
the following connection 
holds 
between 
the quotient $\frac{W_I(q)}{W_{I\cap wJw^{-1}}(q)}$
which appears 
in the general formula in Theorem~2.1 in~\cite{dist-reg<buildings&struc-const<HeckeA} 
and 
the corresponding factor 
in Proposition~\ref{prop:delta-2-transitive=>distance-formula(cell_stab)}. 
We may use  
part~(1) of 
Remark~\ref{rem:compute-additional-factor_|G_b.a|} 
and see that 
the numerator 
of that quotient 
is the cardinality of the $I$-residue 
and the denominator is 
the cardinality of 
the projection of the $J$-residue 
at Weyl-distance~$w$ 
on the $I$-residue. 
\end{remark}

The proposition below 
establishes 
the advertised characterisation 
of tidy subgroups 
for standard parabolics. 
The general case 
will be obtained 
by conjugating.

\begin{prop}\label{prop:I-straight_displacement<=>I-face-Stab=tidy}
Let~$G$ be 
a closed group of Weyl-transitive, type-preserving automorphisms 
a thick, locally finite building. 
Then 
the standard $I$-parabolic subgroup $P=P_I$ is tidy for $g\in G$ if and only if the element $w=\delta(P,gP)$ is $I$-straight. 
\end{prop}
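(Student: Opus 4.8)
The plan is to use the tidiness criterion of Lemma~\ref{lem:tidyness-crit_powers} together with the counting formula of Lemma~\ref{lem:count}, applied with $g^n$ in place of $g$. Write $P = P_I$ and $w = \delta(P, gP)$. The key observation is that, by Lemma~\ref{lem:count}, for any $(I,I)$-reduced $v \in W$ the quantity $|P \colon P \cap g^{-1}\cdots|$-type index can be read off combinatorially: more precisely, $|\{hP \colon \delta(gP,hP) = v\}|$ equals $\tfrac{W_I(q)}{W_{I\cap vIv^{-1}}(q)}\,q_v$. Since the orbit-stabilizer theorem gives $|P \colon P \cap g^{-n}Pg^n| = |P.g^nP| = |\{hP \colon \delta(P, hP) = \delta(P, g^nP)\}|$ (using Weyl-transitivity to identify the orbit with a Weyl-distance sphere), and since $\delta(P, g^nP)$ is the unique $(I,I)$-reduced element $w_n$ such that $g^nP \in Pw_nP$, the criterion of Lemma~\ref{lem:tidyness-crit_powers} becomes the purely combinatorial statement
\[
\frac{W_I(q)}{W_{I\cap w_nIw_n^{-1}}(q)}\,q_{w_n} = \left(\frac{W_I(q)}{W_{I\cap w_1Iw_1^{-1}}(q)}\,q_{w_1}\right)^{n} \qquad \text{for all } n \geq 0,
\]
where $w_n$ is the $(I,I)$-reduced representative of $g^nP$, and $w_1 = w$.

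First I would establish the purely group-theoretic lemma that $w_n$, the $(I,I)$-reduced representative of $w^n$ (equivalently of $g^nP$), satisfies $\ell(w_n) \le n\,\ell(w)$, with equality for all $n$ precisely when $w$ is $(I,I)$-reduced with $wIw^{-1} = I$ and $\ell(w^n) = n\ell(w)$ — i.e., precisely when $w$ is $I$-straight. This is the Coxeter-combinatorial heart of the argument and I expect it to be the main obstacle: one must understand how the product $w^n$ folds when multiplied on both sides by $W_I$, and show that the ``deficiency'' $n\ell(w) - \ell(w_n)$ vanishes identically iff no folding ever occurs, which forces both $wIw^{-1} = I$ (so that $w$ remains $(I,I)$-reduced after powering) and $\ell(w^n) = n\ell(w)$ (straightness in the ordinary sense). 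The subtlety is that $q_{w_n}$ and the ratio $W_I(q)/W_{I\cap w_nIw_n^{-1}}(q)$ must be tracked together: one should show the product $\tfrac{W_I(q)}{W_{I\cap w_nIw_n^{-1}}(q)}q_{w_n}$ is submultiplicative in $n$ (analogous to $q(g^{nl}.a,a) \le q(g^l.a,a)^n$ in Lemma~\ref{lem:tidycellorbits=aligned}), and that equality throughout forces $W_{I\cap w_nIw_n^{-1}} = W_I$ for all $n$, hence $w_nIw_n^{-1} \supseteq I$, hence $= I$ since both are spherical of the same size, hence $w^n$ is already $(I,I)$-reduced so $w_n = w^n$ and $q_{w_n} = q_w^n$ forces $\ell(w^n) = n\ell(w)$.

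For the forward direction (tidy $\Rightarrow$ $I$-straight) I would feed the equality case of submultiplicativity into the above to extract $I$-straightness. For the converse, assuming $w$ is $I$-straight, one has directly $w^n$ is $(I,I)$-reduced with $\ell(w^n) = n\ell(w)$ and $w^nIw^{-n} = I$, so $w_n = w^n$, $W_{I\cap w^nIw^{-n}}(q) = W_I(q)$, and $q_{w^n} = q_w^n$; substituting into Lemma~\ref{lem:count} gives $|P \colon P \cap g^{-n}Pg^n| = q_w^n = |P\colon P \cap g^{-1}Pg|^n$, and Lemma~\ref{lem:tidyness-crit_powers} yields tidiness. Finally I would note that, as a byproduct, $s(g) = q_w = q(gP, P)$ in this case, consistent with Theorem~\ref{thm:straight-cell}, though this is not part of the statement to be proved.
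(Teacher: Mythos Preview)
Your overall framework matches the paper's: both directions are driven by M\"oller's criterion (Lemma~\ref{lem:tidyness-crit_powers}) combined with the sphere count of Lemma~\ref{lem:count}, and the backward direction is essentially as you describe (the paper makes explicit the step $Pg^nP\subseteq (PwP)^n=Pw^nP$, which you gloss as ``$w_n=w^n$'', but this is minor).

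There is, however, a genuine gap in your forward direction. You write that $w_n$ is ``the $(I,I)$-reduced representative of $w^n$ (equivalently of $g^nP$)''. These are \emph{not} equivalent: $w_n:=\delta(P,g^nP)$ depends on $g$, not just on $w$, because $(PwP)^n$ is in general a union of several $P$-double cosets and $g^n$ may land in any one of them. Consequently the inference ``$w_nIw_n^{-1}=I$, hence $w^n$ is already $(I,I)$-reduced, so $w_n=w^n$'' is unjustified: knowing that $w_n$ normalises $I$ tells you nothing directly about $w^n$, and you cannot yet assume $g^n\in Pw^nP$. What you have established at that stage is only $wIw^{-1}=I$ and $q_{w_n}=q_w^n$; the remaining assertion $\ell(w^n)=n\,\ell(w)$ (equivalently $w_n=w^n$) is exactly the missing step.

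The paper closes this gap with a gallery argument that you have not anticipated. Using $wIw^{-1}=I$, the $n$ galleries of type $w$ from $g^iP$ to $g^{i+1}P$, spliced together with connectors in $W_I$, can be rearranged into a gallery of type $w'\cdot w\cdot w\cdots w$ (with $w'\in W_I$) from $P$ to $g^nP$. If $\ell(w^{n_0})<n_0\,\ell(w)$ for some $n_0$, then for $n=kn_0$ each block of $n_0$ consecutive $w$'s can be shortened by at least one, producing a gallery from $P$ to $g^nP$ of length at most $\ell(w_n)+\mathrm{diam}(W_I)-k$; taking $k>\mathrm{diam}(W_I)$ contradicts the minimality of $\ell(w_n)$. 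This combinatorial shortening argument is the substantive content you are missing.
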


\begin{proof}
$(\Longrightarrow)$ Suppose that $P$ is tidy for $g$. 
By Lemma~\ref{lem:tidyness-crit_powers} 
for all $n\in\mathbb{N}$ 
we have 
$|P:P\cap g^{-n}Pg^n|=|P:g^{-1}Pg|^n$, and it follows that $|P\backslash Pg^nP|=|P\backslash PgP|^n$ for all $n\in\mathbb{N}$. Write $w_n=\delta(P,g^n.P)$ (so that $w_n$ is necessarily $(I,I)$-reduced, and $w_1=w$).
We now argue 
similarly to 
the proof of Lemma~\ref{lem:tidycellorbits=aligned}.  
Lemma~\ref{lem:count} gives 
for all integers~$l$ 
\begin{equation}\label{eq:tidy-spheres}
\frac{W_I(q)}{W_{I\cap w_{nl}Iw_{nl}^{-1}}(q)}q_{w_{nl}}=\left(\frac{W_I(q)}{W_{I\cap w_lIw_l^{-1}}(q)}\right)^nq_{w_l}^n\,,
\end{equation}
from which we deduce 
for all integers~$l$ 
that 
\[
\frac{W_I(q)}{W_{I\cap w_{nl}Iw_{nl}^{-1}}(q)}
=\left(\frac{W_I(q)}{W_{I\cap w_lIw_l^{-1}}(q)}\right)^n\frac{q_{w_l}^n}{q_{w_{nl}}}
\ge \left(\frac{W_I(q)}{W_{I\cap w_lIw_l^{-1}}(q)}\right)^n\,.
\]
Since 
the left hand side 
is bounded above 
by~$W_I(q)$, 
we deduce that $W_{I\cap w_lIw_l^{-1}}(q)=W_I(q)$ for all $l$, and so $w_lIw_l^{-1}=I$ for all $l$. In particular, $wIw^{-1}=I$. 
Then formula~\eqref{eq:tidy-spheres} 
applied to~$l=1$ 
gives 
$q_{w_n}=q_w^n$, 
for all integers~$n$ 
and from this 
we deduce that $\ell(w_n)=n\,\ell(w)$ 
for all~$n$. 
All that remains is to prove that $\ell(w^n)=n\,\ell(w)$. We do this below by showing that in fact $w_n=w^n$ (and then use $\ell(w_n)=n\,\ell(w)$). 

Since $w_n=\delta(P,g^nP)$ there is a gallery of type $w_n$ joining a chamber of $P$ 
to a chamber of $g^nP$, and this gallery has minimal length amongst all such galleries. For each $i$, since $\delta(g^iP,g^{i+1}P)=\delta(P,gP)=w$ there is a reduced gallery of type~$w$ joining a chamber of $g^iP$ to a chamber of $g^{i+1}P$. Connecting these galleries together using connecting galleries whose types are in $W_I$ produces a gallery of type 
$$
w\cdot w_1\cdot w\cdot w_2\cdots w\cdot w_{n-1}\cdot w\quad\text{where}\quad w_1,\ldots,w_{n-1}\in W_I,
$$
from a chamber of $P$ to a chamber of $g^nP$. Of course this gallery might not be of reduced type. However the part of type $w\cdot w_1$ is of reduced type (because $w$ is $(I,I)$-reduced). Since $wIw^{-1}=I$ we may write $ww_1=w_1'w$ with $w_1'\in W_I$, and it follows that there is also a reduced gallery of type $w_1'\cdot w$ connecting the start and and chambers of the original gallery of type $w\cdot w_1$. So we make this distortion to produce a gallery of type 
$$
w_1'\cdot w\cdot w\cdot w_2\cdots w\cdot w_{n-1}\cdot w
$$
joining a chamber of $P$ to a chamber of $g^nP$. We can iterate this process to produce a gallery of type 
$$
w_1'\cdot w_2'\cdots w_{n-1}'\cdot\underbrace{w\cdot w\cdots w}_{n\text{ terms}}
$$
joining a chamber of $P$ to a chamber of $g^nP$. The initial leg of the gallery of type $w_1'\cdot w_2'\cdots w_{n-1}'$ stays completely inside the spherical parabolic $P_I$, and thus may be replaced by a gallery of type $w'\in W_I$ of length bounded by $\mathrm{diam}(W_I)$. Thus we have produced a gallery of type 
$$
w'\cdot\underbrace{w\cdot w\cdots w}_{n\text{ terms}}
$$
joining a chamber of $P$ to a chamber of $g^nP$. 

Suppose that for some $n_0\in\mathbb{N}$ we have $\ell(w^{n_0})< n_0\,\ell(w)$. It follows that the gallery of type $w\cdot w\cdots w$ ($n_0$ terms) above can be replaced by a gallery of length bounded by $n_0\,\ell(w)-1$. Now consider the case $n=kn_0$. We have a gallery of type $w'\cdot w\cdot w\cdots\cdot w$ (with $n$ factors $w$) from a chamber of $P$ to a chamber of $g^nP$. Break this gallery up as:
$$
w'\cdot\underbrace{\underbrace{(w\cdots w)}_{n_0\text{ terms}}\cdot\underbrace{(w\cdots w)}_{n_0\text{ terms}}\cdots\underbrace{(w\cdots w)}_{n_0\text{ terms}}}_{k\text{ terms}}.
$$
Each of the $\underbrace{(w\cdots w)}_{n_0\text{ terms}}$ parts of the gallery can be replaced by galleries of length at most $n_0\,\ell(w)-1$ connecting the same start and end galleries. Thus overall we produce a gallery of length at most
$$
\mathrm{diam}(W_I)+k\,(n_0\,\ell(w)-1)=n\,\ell(w)+\mathrm{diam}(W_I)-k=\ell(w_n)+\mathrm{diam}(W_I)-k.
$$
Thus choosing $k>\mathrm{diam}(W_I)$ we obtain a contradiction because we have a gallery of length strictly shorter than the minimum $\ell(w_n)$. Thus $\ell(w^n)=n\,\ell(w)$ for all $n\in\mathbb{N}$ and we are done.

$(\Longleftarrow)$ Suppose that $w=\delta(P,gP)$ is $(I,I)$-reduced, and that $wIw^{-1}=I$, and $\ell(w^n)=n\,\ell(w)$ for all $n\in\mathbb{N}$. Then
\begin{align*}
Pg^nP&\subseteq PgP\cdot PgP\cdots PgP=PwP\cdot PwP\cdots PwP=Pw^nP,
\end{align*}
and since double cosets are either disjoint or equal we have $Pg^nP=Pw^nP$. Therefore, using $wIw^{-1}=I$ and $\ell(w^n)=n\,\ell(w)$ we have
\begin{align*}
|P\backslash Pg^nP|&=|P\backslash Pw^nP|=\frac{W_I(q)}{W_{I\cap w^nIw^{-n}}(q)}q_{w^n}
=q_w^n=|P\backslash PwP|^n=|P\backslash PgP|^n.
\end{align*}
Thus $|P:P\cap g^{-n}Pg^n|=|P:g^{-1}Pg|^n$ for all $n\in\mathbb{N}$ and so $P$ is tidy for $g$ 
by Lemma~\ref{lem:tidyness-crit_powers}. 
\end{proof}

%
We now deduce  
the announced tidiness criterion 
for stabilizers of simplices 
in terms of Weyl-displacement 
in general. 

\begin{theorem}\label{thm:cotype-straight_displacement<=>simplex-Stab=tidy}
Let~$G$ be 
a closed group of Weyl-transitive, type-preserving automorphisms 
a thick, locally finite building 
and 
let~$g\in G$. 
Then 
the stabilizer of 
a simplex~$a$ is tidy for~$g$ 
if and only if 
$\delta(a,g.a)$ is $\operatorname{cotype}(a)$-straight. 
\end{theorem}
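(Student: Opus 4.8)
The plan is to reduce the general statement to Proposition~\ref{prop:I-straight_displacement<=>I-face-Stab=tidy} by a conjugation argument, exploiting Weyl-transitivity to move an arbitrary simplex to a standard parabolic simplex of the same cotype. Specifically, let $I=\operatorname{cotype}(a)$ and let $a_0$ be the cotype-$I$ simplex of the base chamber, with $P=P_I=G_{a_0}$ its stabilizer. Since $G$ acts Weyl-transitively and type-preservingly, $G$ in particular acts transitively on simplices of cotype $I$ (any two such simplices are the cotype-$I$ faces of chambers whose Weyl distance can be taken to be $1$, or more simply one applies transitivity on chambers). So choose $h\in G$ with $h.a_0=a$; then $G_a=hPh^{-1}$.

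First I would record the translation of tidiness under this conjugation: $G_a$ is tidy for $g$ if and only if $h^{-1}G_ah=P$ is tidy for $h^{-1}gh$. This is immediate from the definition of tidiness via the distortion index, since conjugation by $h$ is an automorphism of $G$ preserving the collection of compact open subgroups and all the relevant indices; alternatively it is visible from Lemma~\ref{lem:tidyness-crit_powers} directly, since $|h^{-1}G_ah : h^{-1}G_ah \cap (h^{-1}g^{-n}h)(h^{-1}G_ah)(h^{-1}g^{n}h)| = |G_a : G_a \cap g^{-n}G_ag^n|$ for every $n$. Then apply Proposition~\ref{prop:I-straight_displacement<=>I-face-Stab=tidy} to the element $h^{-1}gh$ and the standard $I$-parabolic $P$: we get that $P$ is tidy for $h^{-1}gh$ if and only if $\delta(P,(h^{-1}gh).P)=\delta(a_0,h^{-1}g.a)$ is $I$-straight.

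The remaining point is to check that this Weyl-distance between standard simplices agrees with $\delta(a,g.a)$, so that $I$-straightness of one is $I$-straightness of the other. Here the Weyl distance $\delta(\mathcal{X},\mathcal{Y})$ between two residues is understood as the coset in $W_I\backslash W/W_I$ (equivalently the $(I,I)$-reduced element) given by $\min\bigl(\delta(\mathcal X,\mathcal Y)\bigr)$ of the excerpt's notation; I would note that since $h$ is an automorphism it preserves Weyl distances between chambers and hence between residues, so $\delta(a_0,h^{-1}g.a)=\delta(h.a_0,g.a)=\delta(a,g.a)$ as $(I,I)$-reduced elements of $W$. Thus $\delta(a_0,(h^{-1}gh).a_0)$ is $I$-straight if and only if $\delta(a,g.a)$ is, completing the equivalence.

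I expect the main obstacle to be purely notational bookkeeping rather than a genuine mathematical difficulty: one must be careful that ``$\delta(a,g.a)$'' in the theorem statement means the $(I,I)$-reduced representative (the $W_I\backslash W/W_I$ double coset), and that the notion of $I$-straightness — which requires the element to be $(I,I)$-reduced — is applied to that representative. Once that is pinned down, invariance of $\delta$ under the automorphism $h$ does all the work, and no new geometric input beyond Proposition~\ref{prop:I-straight_displacement<=>I-face-Stab=tidy} is needed. It may also be worth remarking that transitivity on cotype-$I$ simplices is where thickness and Weyl-transitivity are used (Weyl-transitivity gives chamber-transitivity, which suffices), so the hypotheses in the theorem statement are exactly what is needed.
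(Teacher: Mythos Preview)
Your proposal is correct and follows essentially the same approach as the paper: reduce to the standard parabolic case by choosing $h\in G$ with $h.a_0=a$, so that $G_a=hPh^{-1}$ and $\delta(a,g.a)=\delta(P,h^{-1}gh.P)$, and then invoke Proposition~\ref{prop:I-straight_displacement<=>I-face-Stab=tidy}. The paper's proof is just a terser version of what you wrote, without the extra commentary on conjugation-invariance of tidiness and the meaning of $\delta$ between residues.
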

\begin{proof}
The simplex~$a$ 
is of the form~$hP$ 
with~$P$ 
a standard parabolic 
of $\operatorname{cotype}(a)$. 

By Propositon~\ref{prop:I-straight_displacement<=>I-face-Stab=tidy}, 
\[
\delta(a,g.a)=\delta(hP,g.hP)=\delta(P,h^{-1}gh.P)
\]
is $\operatorname{cotype}(a)$-straight 
if and only if 
$P$ is tidy for~$h^{-1}gh$. 
But this is equivalent to~$hPh^{-1}$, 
the stabilizer of~$hP=a$ being tidy for~$g$. 
The proof is complete. 
\end{proof}

\begin{definition}
For 
a type preserving automorphism~$g$  
of a building~$\Delta$ 
we 
call 
any straight element in $\{\delta(a,g.a)\colon a\text{ is a simplex of }\Delta\}$ 
a \emph{straight displacement} 
of~$g$.
\end{definition}

Theorem~\ref{thm:cotype-straight_displacement<=>simplex-Stab=tidy}
has implication for 
the values of the scale function. 

\begin{corollary}\label{cor:scale-values=q-values(straight-elements)}
Let~$G$ be 
a closed group of Weyl-transitive, type-preserving automorphisms 
a thick, locally finite building. 
Then
the scale of any element~$g$  
of~$G$ 
is 
the $q$-value 
of a straight displacement 
of $g$ 
(possibly trivial). 
\end{corollary}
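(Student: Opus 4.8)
The plan is to obtain the statement by combining the existence result of Theorem~\ref{thm:everything=straight} with the algebraic tidiness criterion of Theorem~\ref{thm:cotype-straight_displacement<=>simplex-Stab=tidy}. First I would apply Theorem~\ref{thm:everything=straight} to produce a simplex~$a$ whose stabiliser~$G_a$ is tidy for~$g$, for which the pairs $(g^n.a,a)$ are aligned for all $n\in\mathbb{Z}$, and for which $s(g)=q(a,g.a)$. When $g$ acts by elliptic isometries one may take $a$ to be the carrier of a fixed point, so that $g.a=a$, the displacement $\delta(a,g.a)=1$ is (trivially) $\operatorname{cotype}(a)$-straight, and $s(g)=q(a,a)=1=q_1$; this accounts for the ``possibly trivial'' alternative, and from then on I would concentrate on the hyperbolic case.

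Next I would feed the tidy stabiliser $G_a$ into Theorem~\ref{thm:cotype-straight_displacement<=>simplex-Stab=tidy} to conclude that $w:=\delta(a,g.a)$ is $\operatorname{cotype}(a)$-straight. Since, as observed just after the definition of $I$-straightness, being $I$-straight implies being $J$-straight for every $J\subseteq I$, in particular $w$ is $\varnothing$-straight, i.e.\ straight. Being of the form $\delta(a,g.a)$ for a simplex~$a$ of~$\Delta$, the element $w$ is therefore a straight displacement of~$g$ in the sense of the preceding definition.

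It then remains to identify $s(g)=q(a,g.a)$ with the $q$-value $q_w$ of this straight displacement. One route is to unwind the computation in the proof of Proposition~\ref{prop:delta-2-transitive=>distance-formula(cell_stab)}: there $q(g.a,a)$ is evaluated as $q(\gamma)=q_{s_1}\cdots q_{s_n}$ for a reduced word $s_1\cdots s_n$ representing the minimal Weyl-distance between the residues of $g.a$ and~$a$, which is $w$ or $w^{-1}$; since $q(a,g.a)=q(g.a,a)$ and $q_w=q_{w^{-1}}$, this yields $q(a,g.a)=q_w$. Alternatively one writes $a=hP$ with $P=P_I$ the standard parabolic of cotype $I=\operatorname{cotype}(a)$; then $P$ is tidy for $h^{-1}gh$ and $\delta(P,(h^{-1}gh).P)=w$ is $I$-straight, so by the scale formula of Section~\ref{sec:tidy&scale}, by conjugating cosets by $h^{-1}$, and then by the computation in the converse direction of the proof of Proposition~\ref{prop:I-straight_displacement<=>I-face-Stab=tidy} (namely Lemma~\ref{lem:count} with $wIw^{-1}=I$), one gets $s(g)=|G_a\backslash G_agG_a|=|P\backslash P(h^{-1}gh)P|=q_w$. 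Either way $s(g)=q_w$, as required. The one point that genuinely requires care is this final identification $s(g)=q_w$; the two preceding paragraphs are direct appeals to results already established, so I expect the bookkeeping linking the ``geometric'' quantity $q(a,g.a)$ to the ``algebraic'' quantity $q_w$ to be the only real content of the argument.
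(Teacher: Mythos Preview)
Your proposal is correct and follows essentially the same route as the paper: choose a simplex~$a$ with tidy stabiliser via Theorem~\ref{thm:everything=straight}, read off $s(g)=q(a,g.a)$, and then invoke Theorem~\ref{thm:cotype-straight_displacement<=>simplex-Stab=tidy} to see that $\delta(a,g.a)$ is $\operatorname{cotype}(a)$-straight and hence straight. The only difference is that the paper treats the identification $q(a,g.a)=q_{\delta(a,g.a)}$ as immediate (it is, from the description of $q(b,a)$ via the walls crossed by a minimal gallery in the proof of Proposition~\ref{prop:delta-2-transitive=>distance-formula(cell_stab)}), whereas you supply two explicit justifications; your caution there is reasonable but not strictly necessary.
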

\begin{proof}
Let~$g\in G$.  
Choose 
a simplex~$a$ 
in~$\Delta$ 
whose stabilizer is 
tidy for~$g$; 
this is possible  
by Theorem~\ref{thm:everything=straight}. 
By the same Theorem, 
we have $s(g)=q(a,g.a)$, which equals~$q_{\delta(a,g.a)}$. 
The element~$\delta(a,g.a)$ 
is $\operatorname{cotype}(a)$-straight 
by Theorem~\ref{thm:cotype-straight_displacement<=>simplex-Stab=tidy}, 
in particular 
it is straight. 
This proves our claim. 
\end{proof}

We will 
improved upon 
this corollary 
in Corollary~\ref{cor:scal-values(Weyl-trans-Aut)}.

\section{Geometric characterisation of simplices with tidy stabilizer}

We now establish, 
for a hyperbolic element,  
a connection 
between metric and combinatorial axes 
in the reverse direction to Theorem~\ref{thm:everything=straight}.
In a special case 
this result 
is part of the content of 
Lemma~5.4 in~\cite{orthForms(KMGs)=acyl-hyp}.

\begin{theorem}\label{thm:combinatorial-axis=metric-axis}
Let~$G$ be 
a closed group of Weyl-transitive, type-preserving automorphisms 
a thick, locally finite building. 
Let~$a$ be a simplex 
whose stabiliser 
is tidy 
for a hyperbolic element~$g\in G$. 
Then 
there is 
an axis 
of~$g$ 
that intersects 
the interior 
of the geometric realisation 
of~$a$.  
\end{theorem}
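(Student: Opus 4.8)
The plan is to run the implication of Theorem~\ref{thm:everything=straight} in reverse: there we saw that the carrier of a suitable open interval of an axis has tidy stabilizer; now we must recover an axis through (the interior of) a given simplex with tidy stabilizer. Since $G_a$ is tidy for $g$, by Theorem~\ref{thm:cotype-straight_displacement<=>simplex-Stab=tidy} the element $w=\delta(a,g.a)$ is $\operatorname{cotype}(a)$-straight, and by Lemma~\ref{lem:tidycellorbits=aligned} the pairs $(g^n.a,g^m.a)$ are aligned for all $n,m\in\mathbb{Z}$. First I would fix an apartment $\mathbb{A}$ containing $a$ and $g.a$ (hence containing the convex hull of these two simplices), and argue that the alignment condition, together with $\ell(w^n)=n\ell(w)$, forces the simplices $a, g.a, g^2.a,\ldots$ to lie in ``straight-line'' position inside the Davis realisation: the combinatorial convex hull of $a$ and $g^n.a$ has thickness $q_w^n$, i.e. the walls separating $a$ from $g^n.a$ are exactly the $n$ disjoint ``translates'' of the walls separating $a$ from $g.a$, with no wall separating consecutive pairs collapsing or being re-crossed.

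The key geometric step is to produce, from this combinatorial straightness, an honest geodesic line fixed setwise by $g$ and meeting the interior of $|a|$. I would pick a point $x$ in the interior of the geometric realisation $|a|$ and consider the bi-infinite concatenation of geodesic segments $[\,g^{n}.x,\,g^{n+1}.x\,]$ for $n\in\mathbb{Z}$; by $g$-equivariance this path is $g$-invariant and $d(g^n.x,g^{n+1}.x)$ is constant, so it suffices to show the path is a geodesic, equivalently that it makes no ``turn'' at any $g^n.x$. Here I would invoke the CAT(0) geometry together with the wall structure: the walls of $X$ separating $x$ from $g.x$ coincide (by Lemma~\ref{lem:separating_walls-compatibility}) with the walls separating $a$ from $g.a$, and the combinatorial straightness just established shows that the walls between $g^{-1}.x$ and $x$ are disjoint from those between $x$ and $g.x$. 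A point that fails to be on a geodesic would have some wall separating $g^{-1}.x$ from $g.x$ pass ``between'' the two segments at $x$ in a way that contradicts this disjointness; quantitatively, $d(g^{-1}.x,g.x)$ equals the number of walls separating $g^{-1}.x$ from $g.x$ (weighted appropriately via the carrier), which by straightness is $2|g|$, forcing concatenation to be geodesic. Then this geodesic line is an axis of $g$ passing through $x\in\operatorname{int}|a|$, as required.

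The main obstacle I expect is the last geometric step — passing from the purely combinatorial straightness statement (about reduced galleries and $\ell(w^n)=n\ell(w)$) to the metric statement that the piecewise-geodesic $g$-orbit path of an interior point of $|a|$ is an actual geodesic. The subtlety is that the Davis realisation is not a simplicial complex with a Euclidean metric in the naive sense; one must use the characterisation that walls are convex, that a geodesic with a nondegenerate segment in a wall lies in the wall, and that two points are separated by a wall iff their carriers are (Lemma~\ref{lem:separating_walls-compatibility}). I would handle this by citing Lemmas~4.1--4.2 of~\cite{geoFlats<CAT0-real(CoxGs+Tits-buildings)}, exactly as in the proof of Theorem~\ref{thm:everything=straight}, to control walls along the axis, and by an explicit computation that the set of walls separating $g^{-n}.x$ from $g^{n}.x$ has cardinality $2n$ times that separating $x$ from $g.x$ (using straightness of $w$), which pins down $d(g^{-n}.x, g^{n}.x)=2n|g|$ and hence geodesy of the concatenation. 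An alternative, possibly cleaner, route is to first locate an axis $\alpha$ of $g$ using semisimplicity, observe via Theorem~\ref{thm:everything=straight} that the carrier of a generic open interval of $\alpha$ is a simplex $a'$ with $G_{a'}$ tidy, and then use part~(2) of Lemma~\ref{lem:Stab-transitive(pairs(cells)_givenWdist)} together with alignment to conjugate $a'$ onto $a$ by an element of $G_a\cap G_{a'}$-type stabilizer, transporting $\alpha$ to an axis through $\operatorname{int}|a|$; I would develop whichever of these two arguments turns out to require fewer new lemmas.
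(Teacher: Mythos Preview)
Your proposal has a genuine gap in the main argument. You choose an \emph{arbitrary} interior point $x$ of $|a|$ and attempt to show that the concatenation of the segments $[g^n.x,g^{n+1}.x]$ is a geodesic. But axes pass through very specific points of a simplex, not through every interior point: already in an affine apartment a translation has a single direction, and most interior points of a given alcove are not on any axis of a given translation. So there is no reason the concatenation should be straight for your chosen $x$. Your proposed justification compounds this: you assert that $d(g^{-n}.x,g^{n}.x)$ can be read off from the set of separating walls (``weighted appropriately''), but the $\mathsf{CAT}(0)$ distance in the Davis realisation is \emph{not} a wall count, weighted or otherwise. The disjointness of the wall sets separating $g^{-1}.x$ from $x$ and $x$ from $g.x$ (which does follow from $\ell(w^2)=2\ell(w)$) tells you that a minimal gallery concatenates, not that the metric geodesic does. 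Lemmas~4.1--4.2 of \cite{geoFlats<CAT0-real(CoxGs+Tits-buildings)} go the other way (from an axis to statements about walls containing it) and do not supply the missing implication.

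The paper's proof avoids this by never trying to manufacture the axis from an arbitrary point. Instead it works in a single apartment: since $w=\delta(a,g.a)$ is $I$-straight (with $I=\operatorname{cotype}(a)$), the half-orbit $g^{\mathbb{N}}.c_0$ is $W$-isometric to a subset of the Coxeter complex and hence lies in one apartment $A$; identifying $A$ with the Coxeter complex, $g$ acts on these chambers as $w$ does. Now one invokes an external result, Lemma~3.2 of \cite{strongTitsAlt(<CoxeterG)}, which says precisely that a straight $w$ has an axis $L$ in the Davis realisation of the Coxeter complex lying in all the walls through $c_0$ of type $i\in I$, hence through the interior of $a$. Transporting $L$ into $A$ gives the desired axis of $g$. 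The key point you are missing is this appeal to Noskov--Vinberg to pin down the \emph{correct} point (indeed, the correct line) inside the simplex; your argument tries to dispense with it and cannot. Your alternative ``conjugate $a'$ onto $a$'' route is too vague as stated: Lemma~\ref{lem:Stab-transitive(pairs(cells)_givenWdist)}(2) gives transitivity on chambers of projections, not a map carrying one simplex-with-axis to another, and there is no evident element of $G$ taking an axis through $a'$ to one through $a$ while preserving the $g$-action.
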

\begin{proof}
We may suppose 
that~$a$ is 
a face of 
the fundamental chamber, $c_0$. 
Then $w:=\delta(a,g.a)$ is $\operatorname{cotype}(a)$-straight. 
Hence, 
by Lemma~3.2 in~\cite{strongTitsAlt(<CoxeterG)}   
the element~$w$ 
has an axis, 
$L$ say,
in the Davis-realisation 
of the Coxeter complex 
of~$W$ 
that is contained in 
all walls 
of~$c_0$ 
of type~$i$ 
for all $i\in\operatorname{cotype}(a)$ 
and hence 
passes through
an interior point of~$a$.

It follows that 
the set $g^\mathbb{N}.c_0$ 
is $W$-isometric to 
a subset of an apartment, 
hence contained in an apartment. 
Therefore 
the geometric realization of 
this apartment 
contains 
the image of~$L$ 
which is 
an axis of~$g$ 
that intersects~$a$ 
and contains an interior point 
of it.  
\end{proof}

A direct, geometric construction 
of an axis 
without recourse to 
a straight displacement 
of the isometry 
is of interest. 

In terms of straight displacements 
we can reformulate 
the theorem 
as follows. 

\begin{corollary}
Let~$G$ be 
a closed group of Weyl-transitive, type-preserving automorphisms 
a thick, locally finite building. 
Then
every element~$g$ 
in~$G$ 
attains 
a straight Weyl-displacement~$\delta(a,g.a)$ 
only for simpices~$a$ 
whose relative interior intersects~$\Min(g)$. 
\end{corollary}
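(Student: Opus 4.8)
The plan is to read the statement off from Theorem~\ref{thm:combinatorial-axis=metric-axis}, via the characterisation of tidy stabilisers in Theorem~\ref{thm:cotype-straight_displacement<=>simplex-Stab=tidy}, after dealing with the elliptic case separately. So let $a$ be a simplex with $w:=\delta(a,g.a)$ straight, put $I=\operatorname{cotype}(a)$, and note that $w$ is automatically $(I,I)$-reduced, being a $\delta$-value between two residues of type $I$.

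The first and crucial step is to promote the bare straightness of $w$ to $I$-straightness, i.e.\ to verify $wIw^{-1}=I$. I would attempt this by imitating the proof of Proposition~\ref{prop:I-straight_displacement<=>I-face-Stab=tidy}: one first shows $\delta(a,g^n.a)=w^n$ for all $n$ --- stacking reduced galleries of type $\tilde w$ (a reduced word for $w$) joining consecutive residues of the $g^i.a$ and using that $\tilde w^n$ is reduced (straightness) and that the left and right descent sets of $w^n$ coincide with those of $w$, so that $w^n$ is the minimal element of $W_Iw^nW_I$ --- and then compares the resulting counts against Lemma~\ref{lem:count} (or directly against $q(a,g^n.a)=q_w^n$), forcing $W_{I\cap w^nIw^{-n}}(q)=W_I(q)$ for all $n$, hence $wIw^{-1}=I$, exactly as in that proof. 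Should the combinatorial route prove recalcitrant, the conclusion can also be sought geometrically: by the results invoked in the proof of Theorem~\ref{thm:combinatorial-axis=metric-axis}, a straight $w$ induces a hyperbolic isometry of the Davis realisation of the Coxeter complex of $W$, and one would then locate an axis of it passing through the relative interior of the cotype-$I$ face of the base chamber. Either way one concludes that $\delta(a,g.a)$ is $\operatorname{cotype}(a)$-straight, so by Theorem~\ref{thm:cotype-straight_displacement<=>simplex-Stab=tidy} the stabiliser $G_a$ is tidy for $g$.

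With $G_a$ known to be tidy the two cases are immediate. If $g$ acts elliptically then $s(g)=1$, and tidiness of $G_a$ gives $q_w=q(a,g.a)=s(g)=1$ (compare the proof of Corollary~\ref{cor:scale-values=q-values(straight-elements)}); since $\Delta$ is thick this forces $w=1$, hence $g.a=a$, so $g$ fixes the barycentre of the geometric realisation of $a$, and that barycentre lies in the relative interior of $a$ and in $\operatorname{Fix}(g)=\Min(g)$. If $g$ acts hyperbolically then Theorem~\ref{thm:combinatorial-axis=metric-axis} yields an axis of $g$ meeting the relative interior of $a$, and since every axis lies in $\Min(g)$ we are again done. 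As $a$ was an arbitrary simplex with $\delta(a,g.a)$ straight, this establishes the corollary.

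The hard part is the first step: extracting $\operatorname{cotype}(a)$-straightness (equivalently, tidiness of $G_a$) from the mere straightness of $\delta(a,g.a)$. The combinatorial argument hinges on the identity $\delta(a,g^n.a)=w^n$ and on pinning down the minimal element of $W_Iw^nW_I$, which is precisely where the descent-set behaviour of powers of a straight element must be controlled; the geometric argument hinges on siting an axis of a straight (but not a priori $I$-straight) element inside the relative interior of the cotype-$I$ face. Everything after this reduction is a direct appeal to results already proved.
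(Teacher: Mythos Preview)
You correctly identify the crux: Theorem~\ref{thm:cotype-straight_displacement<=>simplex-Stab=tidy} yields tidiness of $G_a$ if and only if $\delta(a,g.a)$ is $\operatorname{cotype}(a)$-straight, whereas the corollary's hypothesis is merely that $\delta(a,g.a)$ is straight. The paper's own proof simply invokes Theorem~\ref{thm:cotype-straight_displacement<=>simplex-Stab=tidy} at this point and does not address the discrepancy; so your instinct that something is being skipped is sound.

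However, your plan to bridge the gap by proving $wIw^{-1}=I$ from straightness and $(I,I)$-reducedness cannot succeed, because that implication is false. Take $W=\langle r,s,t\mid r^2=s^2=t^2=1\rangle$ (all $m_{ij}=\infty$), set $I=\{s\}$, and let $w=rt$. Then $w$ is straight (every power of $rt$ is reduced) and $(I,I)$-reduced (neither $srt$ nor $rts$ is shorter than $rt$), yet $wsw^{-1}=rtstr\neq s$, so $wIw^{-1}\neq I$ and $w$ is not $I$-straight. Your combinatorial route therefore breaks at exactly the step you flag as delicate: even granting $\delta(a,g^n.a)=w^n$ (which does hold here), the comparison with Lemma~\ref{lem:count} yields $W_{I\cap w^nIw^{-n}}(q)=1\neq W_I(q)$, not the equality you need. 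The geometric variant fares no better, since an axis of $w=rt$ in the Davis realisation of $(W,S)$ runs through the $r$- and $t$-panels of the base chamber and never meets the interior of its $s$-panel.

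Worse, this same example shows that the corollary, read literally, is false. Lift to a thick locally finite building of this type and use Proposition~\ref{prop:all-straight-displacements-exist} with $I=\varnothing$ to produce a hyperbolic $g\in G$ whose axis passes through a chamber $c$ with $\delta(c,g.c)=rt$. The $s$-panel $a$ of $c$ then satisfies $\delta(a,g.a)=rt$, which is straight; but the Davis realisation is a tree, $\Min(g)$ is the unique axis of $g$, and that axis enters and leaves $c$ through its $t$- and $r$-panels, missing the $s$-panel entirely. Hence the relative interior of $a$ does not meet $\Min(g)$.

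The upshot is that the paper's argument actually proves the corollary with ``$\operatorname{cotype}(a)$-straight'' in place of ``straight'' --- which is precisely the reformulation of Theorem~\ref{thm:combinatorial-axis=metric-axis} via Theorem~\ref{thm:cotype-straight_displacement<=>simplex-Stab=tidy} --- and for that corrected statement your final two paragraphs (the elliptic/hyperbolic split) are exactly right and nothing further is needed.
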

\begin{proof}
Theorem~\ref{thm:cotype-straight_displacement<=>simplex-Stab=tidy} 
implies that 
the stabilizer of 
the simplex~$a$ 
is tidy for~$g$. 
If~$g$ is elliptic, 
$\delta(a,g.a)=1$, 
the simplex~$a$ 
is fixed 
by~$g$  
and 
the claim holds. 
If~$g$ is hyperbolic, 
then  
there is an axis 
of~$g$ 
that intersects 
the relative interior of~$a$ 
by Theorem~\ref{thm:combinatorial-axis=metric-axis}.
\end{proof}

\begin{remark}\label{rem:uniqueness(straight-displacement)}
It is interesting 
to what extent 
straight displacements 
of a group element~$g$ 
are unique.  
Only the case of hyperbolic~$g$ 
is in question, 
since 
the unique Weyl-displacement of 
elliptic elements 
is the trivial element.
For hyperbolic~$g$ 
it can be shown that 
all such displacements 
on simplices 
that contain interior points 
of an axis 
of~$g$ 
are conjugate. 
As straight elements of Coxeter groups 
satisfy very tight restrictions, 
it is conceivable 
that 
stricter restrictions 
apply. 
\end{remark}


%
We can 
conclude 
that minimal displacement 
of points 
in the Davis-realisation 
is 
more or less 
the same as 
minimal displacement of 
their stabiliziers.

\begin{corollary}\label{cor:min-displacement-metric=min-displacement-scale}
Let~$G$ be 
a locally compact group 
that acts Weyl-transitively 
by type-preserving automorphisms 
of a thick building, 
and 
let~$g\in G$. 
Then 
for a point~$x$ 
in a Davis-realisation 
of the building 
the distance between $G_x$ and~$G_{g.x}$ 
is minimal 
if and only if 
$x$ does not lie 
on a wall 
that separates the attracting and repelling points of~$g$ at infinity 
and 
there is a point~$x'$ 
in the carrier of~$x$ 
such that 
the distance in the Davis-realisation 
between~$x'$ and~$g.x'$ 
is minimal. 
\end{corollary}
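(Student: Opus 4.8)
The plan is to reduce the assertion to the tidiness criteria established in Sections~\ref{sec:Ex(tidy-simplex-Stab)} and~\ref{sec:straight-displacements}. Write $a$ for the carrier of $x$, so that $x$ lies in the relative interior of $a$. Since $g$ is type-preserving, an automorphism fixing the interior point $x$ permutes the vertices of $a$ while preserving their pairwise distinct types, hence fixes $a$ pointwise; thus $G_x=G_a$ is a compact, open subgroup, and $G_{g.x}=G_{g.a}=gG_ag^{-1}$. Applying Lemma~\ref{lem:metricCOS} to the inner automorphism $y\mapsto gyg^{-1}$ then shows that $\mathbf{d}(G_x,G_{g.x})$ is minimal exactly when $G_a$ is tidy for $g$. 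It therefore suffices to prove that $G_a$ is tidy for $g$ if and only if no wall through $x$ separates the attracting and repelling points of $g$ at infinity and the relative interior of $a$ meets $\Min(g)$. If $g$ is elliptic this is immediate: then $s(g)=1$, so tidiness of $G_a$ forces $q(a,g.a)=1$ and hence, by thickness, $\delta(a,g.a)=1$ and $g$ fixes $a$ pointwise, while conversely if $g$ fixes an interior point of $a$ then $\delta(a,g.a)=1$ is $\operatorname{cotype}(a)$-straight and $G_a$ is tidy by Theorem~\ref{thm:cotype-straight_displacement<=>simplex-Stab=tidy}; since an elliptic isometry has no points at infinity the wall condition is vacuous, and both sides of the equivalence reduce to $x\in\Min(g)$.

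So assume $g$ is hyperbolic. For the forward implication, suppose $G_a$ is tidy. By Theorem~\ref{thm:cotype-straight_displacement<=>simplex-Stab=tidy} the displacement $\delta(a,g.a)$ is $\operatorname{cotype}(a)$-straight, hence straight, so the corollary preceding the present one yields that the relative interior of $a$ meets $\Min(g)$; equivalently, by Theorem~\ref{thm:combinatorial-axis=metric-axis}, an axis $L$ of $g$ runs through the relative interior of $a$, so in particular $L$ has a nondegenerate piece inside $a$. The second condition thus holds. For the wall condition, recall that the attracting and repelling points of $g$ are the two endpoints of $L$, and that a wall separates these points precisely when it crosses $L$. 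If some such wall $m$ contained $x$, the reflection fixing $m$ would fix the interior point $x$ of $a$, hence (being type-preserving) fix $a$ pointwise, so $a\subseteq m$; but then $L$ has a nondegenerate piece in $a\subseteq m$ and therefore lies entirely in $m$, contradicting that $m$ crosses $L$. Hence no wall through $x$ separates the points of $g$ at infinity.

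For the converse, assume both conditions and fix $x'$ in the relative interior of $a$ lying on an axis $L$ of $g$, with endpoints $g^{\pm}$. Since a wall either contains $\bar a$ or misses the relative interior of $a$, no wall separates two faces of $a$, and hence (Lemma~\ref{lem:separating_walls-compatibility}) no two points of $L\cap\bar a$ are separated by a wall. The key point is that $L\cap\bar a$ is strictly larger than $\{x'\}$: a straight segment cannot meet the relative interior of $a$ in an isolated point while remaining in a single closed cell, so were $x'$ isolated in $L\cap\bar a$ the line $L$ would have to pass at $x'$ from one cell to a different one, which (by examining the antipodal directions of $L$ at $x'$ in the link, a spherical join involving the residue of $a$) forces $L$ to cross a wall containing $a$; such a wall crosses $L$ and so separates $g^{+}$ from $g^{-}$, while containing $x$, contradicting the hypothesis. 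Therefore $L\cap\bar a$ is a nondegenerate segment, and since the relative interior of $a$ is open in $\bar a$ it contains an open sub-interval $J$ of $L$, whose carrier is consequently $a$; no two points of $J$ are separated by a wall, so $G_a$ is tidy for $g$ by part~(2) of Theorem~\ref{thm:everything=straight}, as required.

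The step I expect to be the main obstacle is the geometric claim in the last paragraph: that, under the wall hypothesis, the axis $L$ runs through --- rather than merely touches --- the relative interior of $a$. Proving this carefully uses the fine structure of the Davis realisation: that a wall meeting the relative interior of a simplex contains the whole simplex, that the wall-crossings of a geodesic are isolated so that its carrier is constant on the open sub-intervals they cut out, and that the walls separating the two endpoints of an axis are exactly the walls crossing it. The properties of axes and walls in $\mathsf{CAT}(0)$ realisations of Coxeter complexes recorded in \cite{strongTitsAlt(<CoxeterG)} and \cite{geoFlats<CAT0-real(CoxGs+Tits-buildings)} supply the needed input.
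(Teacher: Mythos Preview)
Your reduction to tidiness via Lemma~\ref{lem:metricCOS} and Theorems~\ref{thm:everything=straight}, \ref{thm:cotype-straight_displacement<=>simplex-Stab=tidy}, and~\ref{thm:combinatorial-axis=metric-axis} is exactly the paper's route; the paper in fact gives less detail than you do, explicitly leaving the reverse implication to the reader. Two technical points are worth tightening. First, in your wall argument for the forward direction, the ``reflection fixing $m$'' is an isometry of an apartment, not a type-preserving automorphism of the building, so the phrase ``being type-preserving, fixes $a$ pointwise'' is not the right justification; the conclusion $a\subseteq m$ is nevertheless correct because walls are subcomplexes of the Davis realisation, hence contain any simplex whose relative interior they meet. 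Second, your claim that the axis $L$ produced by Theorem~\ref{thm:combinatorial-axis=metric-axis} has a \emph{nondegenerate} piece in $a$ does not follow from the statement of that theorem alone; it does follow from its proof, which shows that $L$ lies in the intersection of all walls through~$a$ and hence in the affine span of~$a$, so that $L\cap\mathrm{relint}(a)$ is open in~$L$. With that observation in hand your forward direction is complete, and the same observation (that the walls through $a$ span the directions transverse to $a$) is precisely what makes rigorous the link argument you sketch for the converse: if $L$ were not tangent to $a$ at $x'$ it would have a nonzero component in some wall-normal direction and hence cross a wall containing~$a$, contradicting the hypothesis on~$x$.
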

\begin{proof}
The condition on the walls 
eliminates precisely 
the points 
lying on 
essential walls 
for~$g$. 

For any point~$x$, 
the distance 
between~$G_x$ and~$G_{g.x}$ 
equals the distance 
between~$G_a$ and~$G_{g.a}$, 
where~$a$ is 
the carrier of~$x$. 
If this distance 
is minimal 
among all distances 
between~$G_y$ and~$G_{g.y}$ 
it must equal the minimal distance 
between a compact, open subgroup 
of~$G$ 
and its conjugate under~$g$ 
by Theorem~\ref{thm:everything=straight}
(that is, 
the distance 
is equal to 
$\log(s(g))+\log(s(g^{-1}))=2\,\log(s(g))$), 
and~$G_x$ is tidy for~$g$. 

Theorem~\ref{thm:combinatorial-axis=metric-axis} 
then shows 
that 
the carrier 
of~$x$ 
contains another point~$x'$ 
which lies on 
an axis 
for~$g$ 
as claimed. 

The reverse direction 
is similar, 
but easier, 
and 
is left to the reader. 
\end{proof}

This corollary 
has an interesting consequence, 
which will be elaborated on 
in more detail 
in Section~\ref{sec:apply}, 
compare Remark~\ref{rem:?q-type-restriction-from-straight}.

\begin{remark}\label{rem:min-displacement-metric=min-displacement-scale}
Notice that 
the distance between points 
according to 
the $\mathsf{CAT}(0)$-metric 
is roughly 
proportional to 
the number of walls 
the geodesic 
connecting these points 
crosses,  
while 
the distance between the stabilizers 
of those points 
is equal to 
the sum of 
the logarithms of 
the thicknesses of 
the walls 
the geodesic 
crosses. 
This corollary is 
therefore 
somewhat surprising, 
since, 
in the case 
when 
the building 
has walls  
with different thicknesses,  
it appears to 
say 
that 
(type-preserving) hyperbolic automorphisms 
will  pass through 
walls in a way 
which utilises 
the different available thicknesses 
in a balanced manner. 
\end{remark}

The formulation of 
Corollary~\ref{cor:min-displacement-metric=min-displacement-scale} 
would be much neater if (metric) minimal sets of group elements would always be subcomplexes. 
However, this is not the case 
as is illustrated by 
the following example. 

\begin{example}
Pictured below 
is  
a detail of 
the Davis-realisation 
(with standard choice of distances to walls) 
of 
an apartment with 
Coxeter group~$\mathrm{PGL}_2(\mathbb{Z})$ 
together with 
an illustration of 
the minimal set of 
a hyperbolic isometry 
(between the dotted gray lines 
with a fundamental domain 
under powers of the hyperbolic isometry 
filled in gray) 
together with 
the geometric realisation of 
a chamber 
and 
its image 
under the isometry 
(pictured in blue and green respectively). 

The walls 
are shown 
in black. 
You can 
also 
see 
the hexagons and squares 
used in the dual complex 
to represent 
the corresponding spherical residues. 
The Davis-realisation shown 
arises as 
the universal cover of 
the 4-6-12-tessalation 
of the plane 
with 12-gons removed 
subdivided 
by walls 
as indicated 
in the detail. 
The hyperbolic element 
is a lift of 
a translation symmetry 
of that tessalation.

You can find 
a picture of this Davis-complex 
overlaid over 
the Poincar\'e model of 
the hyperbolic plane 
as figure~12.11 on page~625 in~\cite{buildings_A&B}. 
That version 
shows 
an overview over 
the whole complex, 
but it is 
not metrically accurate. 
From that picture 
however, 
it is clear that 
the Davis-complex 
is 
a thickening of 
the Bass-Serre tree 
of~$\mathrm{PGL}_2(\mathbb{Z})$.  

\begin{center}
\includegraphics[width=14.6cm]{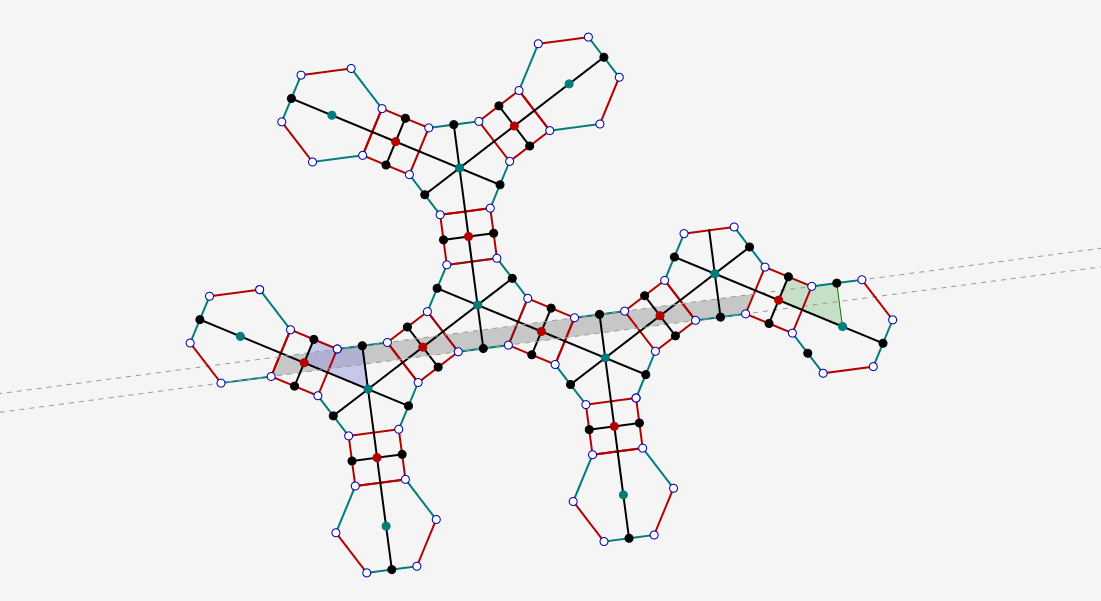}
\end{center}
It is seen 
here 
that 
the minimal set 
of the chosen translation 
is not a subcomplex. 
Indeed, 
an even more extreme 
trait 
can be seen  
in the same example 
by choosing 
the distances to the walls 
that factor in 
the construction of 
the Davis-realisation 
differently. 
If 
we increase 
the distance 
of the vertices 
of the quadrangles 
from the walls 
that do not intersect 
the hexagons 
sufficiently, 
the minimal set 
of the ``same'' isometry 
degenerates into 
a single geodesic. 

While 
this example 
lives within 
a thin building, 
the example  
can be thickened 
in a manner 
that is 
equivariant 
with respect to 
the hyperbolic isometry 
illustrated. 

 The following figure 
makes it easier to verify 
some of the claims 
made above. 
It shows a detail of 
the underlying 4-6-12-tessalation 
with walls extended into the 12-gons, 
which 
in the universal cover 
widen into 
a horoball-neighborhood of 
the cut-out vertices at infinity 
seen in 
the Poincar\'e-model. 
%
\begin{center}
\includegraphics[width=14.6cm]{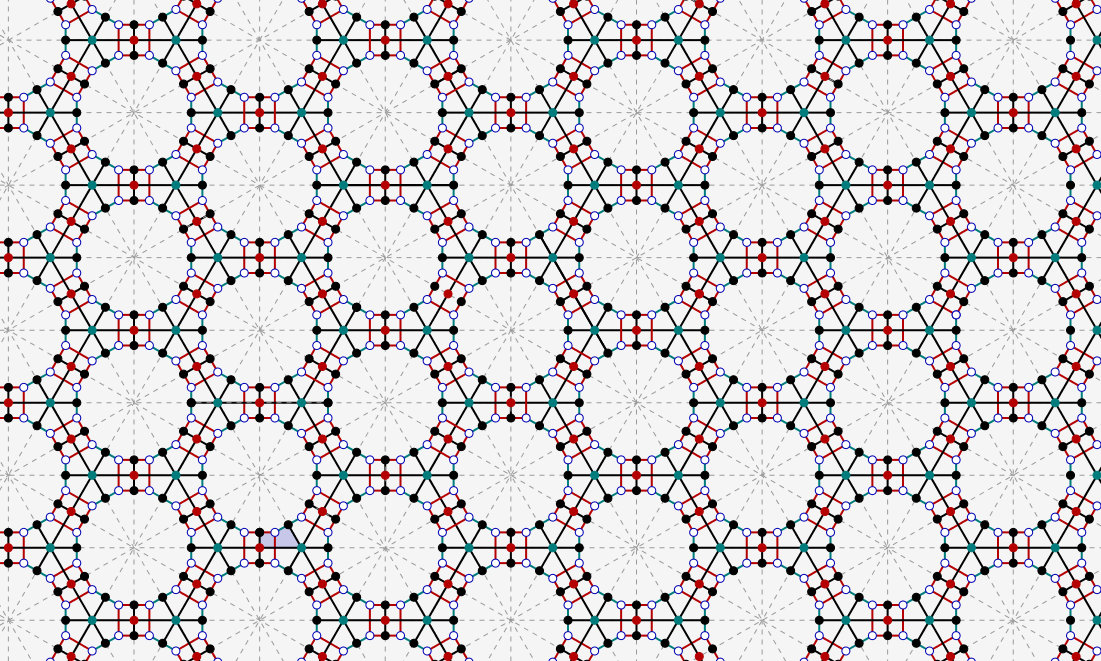}
\end{center}
\end{example}

\section{Applications}\label{sec:apply}
\hyphenation{Weyl-dis-place-ments Weyl-dis-place-ment}

\begin{prop}\label{prop:all-straight-displacements-exist}
Let~$G$ be 
a closed group of Weyl-transitive, type-preserving automorphisms 
a thick, locally finite building 
of type~$(W,S)$. 
Let~$I$ be 
a subset of~$S$ 
and~$w\neq 1$ 
an $I$-straight element 
of~$W$. 
Further, 
let~$a$ and~$b$ be 
 simplices 
of cotype~$I$ 
and 
Weyl-distance~$w$.  
Then 
there exists 
a hyperbolic element~$g$ 
of~$G$   
such that 
some axis 
of~$g$ contains 
interior points 
of~$a$ and~$b$ 
and 
$g.a=b$. 
\end{prop}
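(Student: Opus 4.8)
The plan is to produce $g$ by a single application of Weyl-transitivity and then to read off every asserted property from the characterisations established in Sections~\ref{sec:Ex(tidy-simplex-Stab)} and~\ref{sec:straight-displacements}; note first that $I$ is automatically spherical, being the cotype of the simplices $a$ and $b$. To construct $g$, choose any chamber $c_0$ having $a$ as a face and any chamber $c_1$ having $b$ as a face. A Weyl-transitive group is in particular transitive on chambers (apply the defining condition with $x=y=c_0$ and $x'=y'=c_1$, where $\delta(c_0,c_0)=1=\delta(c_1,c_1)$), so there is $g\in G$ with $g.c_0=c_1$. Being type-preserving, $g$ then carries the cotype-$I$ face of $c_0$ onto that of $c_1$, that is $g.a=b$, whence $\delta(a,g.a)=\delta(a,b)=w$, which by hypothesis is $\operatorname{cotype}(a)$-straight. (In fact this argument shows that \emph{any} element of $G$ mapping $a$ to $b$ will serve, since nothing beyond the relation $\delta(a,g.a)=w$ is used below.)

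The next step is to establish that $G_a$ is tidy for $g$ and that $g$ is hyperbolic. Tidiness of $G_a$ is immediate from Theorem~\ref{thm:cotype-straight_displacement<=>simplex-Stab=tidy}, since $\delta(a,g.a)$ is $\operatorname{cotype}(a)$-straight. For hyperbolicity it suffices to verify that $s(g)>1$. Here I would combine the identity $s(g)=|g\,G_a\,g^{-1}\colon g\,G_a\,g^{-1}\cap G_a|$ (valid because $G_a$ is tidy, and immediate from the definition of the scale), invariance of the subgroup index under conjugation, the alignment of the pair $(g.a,a)$ supplied by Lemma~\ref{lem:tidycellorbits=aligned}, Proposition~\ref{prop:delta-2-transitive=>distance-formula(cell_stab)} applied with $b=g.a$, and part~(2) of Remark~\ref{rem:compute-additional-factor_|G_b.a|}, to obtain
\[
s(g)=|g\,G_a\,g^{-1}\colon g\,G_a\,g^{-1}\cap G_a|=|G_{g.a}\colon G_{g.a}\cap G_a|=q(g.a,a)=q_w.
\]
Since the building is thick and $w\neq 1$, this is at least $2^{\ell(w)}>1$, so $g$ acts as a hyperbolic isometry of the Davis realisation. (Alternatively, after conjugating $a$ to a standard parabolic one may read $s(g)=q_w$ off the computation in the $(\Longleftarrow)$ direction of the proof of Proposition~\ref{prop:I-straight_displacement<=>I-face-Stab=tidy}.)

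The final step is to exhibit the axis. Applying Theorem~\ref{thm:combinatorial-axis=metric-axis} to the hyperbolic element $g$ and the simplex $a$, whose stabiliser is tidy for $g$, there is an axis $L$ of $g$ meeting the relative interior of the geometric realisation of $a$, say at a point $p$. Since $L$ is an axis of the hyperbolic isometry $g$ we have $g.L=L$, and since $g$ acts simplicially it maps the relative interior of $a$ onto that of $g.a=b$; hence $g.p\in L$ lies in the relative interior of $b$. Thus $L$ is an axis of $g$ containing interior points of both $a$ and $b$, while $g.a=b$, which is exactly the assertion.

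I do not anticipate a serious obstacle: the statement is essentially a repackaging of Theorems~\ref{thm:cotype-straight_displacement<=>simplex-Stab=tidy} and~\ref{thm:combinatorial-axis=metric-axis} together with the elementary fact that $q_w>1$ in a thick building for $w\neq 1$. The only point requiring a moment's care is excluding that the element produced by Weyl-transitivity is elliptic, which is precisely the scale computation displayed above; everything else is a direct invocation of results already in hand.
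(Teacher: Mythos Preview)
Your proof is correct and follows essentially the same route as the paper's: construct some $g$ with $g.a=b$, invoke Theorem~\ref{thm:cotype-straight_displacement<=>simplex-Stab=tidy} for tidiness of $G_a$, compute $s(g)=q_w>1$ via Lemma~\ref{lem:tidycellorbits=aligned} and Proposition~\ref{prop:delta-2-transitive=>distance-formula(cell_stab)} to get hyperbolicity, and finish with Theorem~\ref{thm:combinatorial-axis=metric-axis}. The only difference is in the construction of $g$: the paper introduces an auxiliary simplex $x\neq b$ at Weyl-distance $w$ from $a$ and uses full Weyl-transitivity to map the pair $(c_x,c_a)$ to $(c_a,c_b)$, whereas you simply use chamber-transitivity to send any chamber over $a$ to any chamber over $b$; your observation that \emph{any} $g$ with $g.a=b$ suffices shows the paper's extra setup is not actually needed, so your version is a mild streamlining of the same argument.
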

\begin{proof}
Choose a simplex~$x\neq b$ 
of cotype~$I$ 
at Weyl-distance~$w$ 
from~$a$. 
This is possible,  
since 
the number of elements 
of Weyl-distance~$w$ 
from~$a$  
is at least~$q_w\ge 2$. 

Choose chambers~$c_x$, $c_a$ and~$c_b$ 
such that 
$x$ is a face of~$c_x$, 
$a$ is a face of~$c_a$ 
and 
$b$ is a face of~$c_b$ 
and 
$\delta(c_x,c_a)=\delta(c_a,c_b)=w$. 
Then, 
by Weyl-transitivity 
of the action 
of~$G$,  
pick~$g\in G$ 
such that $g.c_x=c_a$ and~$g.c_a=c_b$. 

By construction, 
and because~$g$ preserves types, 
we have 
$g.a=b$ and $\delta(a,g.a)=\delta(a,b)=w$. 
By Theorem~\ref{thm:cotype-straight_displacement<=>simplex-Stab=tidy}
$G_a$ is tidy for~$g$. 
The simplices~$a$ and~$b$ 
are therefore aligned by Lemma~\ref{lem:tidycellorbits=aligned} 
and we may compute 
the scale of~$g$ 
using Proposition~\ref{prop:delta-2-transitive=>distance-formula(cell_stab)} 
to be~$q(a,b)=q_w>1$. 
We conclude that 
$g$ is hyperbolic. 
We obtain 
the claim 
about the position of 
an axis 
of~$g$ 
from Theorem~\ref{thm:combinatorial-axis=metric-axis}. 
The proof is complete. 
\end{proof}

\begin{remark}\label{rem:?q-type-restriction-from-straight}
We can now make Remark~\ref{rem:min-displacement-metric=min-displacement-scale} 
more precise. 
Since 
by the above Proposition, 
every non-trivial straight Weyl-displacement 
can be realised 
with the action of 
some hyperbolic element 
on an axis,  
Corollary~\ref{cor:min-displacement-metric=min-displacement-scale} 
appears to imply 
that 
there is a restricion 
on the thicknesses of different types of walls 
purely in terms of 
the Weyl group, 
limited only by 
the straight elements 
contained therein. 
\end{remark}

Note that 
straight elements 
are widespread 
in infinite Coxeter groups 
by Theorems~4.1.5 and~4.1.6 
in~\cite{asymp-behave(word-metrics>CoxeterGs)}.

Another interesting consequence 
of Proposition~\ref{prop:all-straight-displacements-exist} 
is 
the following corollary, 
which 
improves on 
Corollary~\ref{cor:scale-values=q-values(straight-elements)}. 

\begin{corollary}\label{cor:scal-values(Weyl-trans-Aut)}
Let~$G$ be 
a closed group of Weyl-transitive, type-preserving automorphisms 
a thick, locally finite building 
of type~$(W,S)$. 
Then 
the set of scale values of~$G$ 
equals $\{q_w\colon w\text{ is a straight element of } W\}$. 
In particular, 
$G$ is uniscalar 
if and only if 
$W$, 
and hence $\Delta$,  
are finite; 
or put differently, 
if and only if  
$\Delta$ is spherical. 
\end{corollary}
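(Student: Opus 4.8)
The plan is to derive the set equality from two inclusions -- one already recorded, one a direct application of Proposition~\ref{prop:all-straight-displacements-exist} -- and then to extract the uniscalar dichotomy from thickness together with the length-growth condition in the definition of straightness.

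For ``$\subseteq$'' I would invoke Corollary~\ref{cor:scale-values=q-values(straight-elements)}: the scale of every $g\in G$ is $q_w$ for a (possibly trivial) straight element $w\in W$, so every scale value lies in $\{q_w\colon w\text{ straight}\}$. For ``$\supseteq$'' let $w\in W$ be straight. If $w=1$ then $q_w=1$ is the scale of the identity (indeed of any elliptic element of $G$). If $w\neq1$ then $w$ is $\varnothing$-straight, since straightness \emph{is} $\varnothing$-straightness; by the building axioms there are chambers $a,b$ -- i.e.\ simplices of cotype $\varnothing$ -- with $\delta(a,b)=w$, and Proposition~\ref{prop:all-straight-displacements-exist} applied with $I=\varnothing$ produces a hyperbolic $g\in G$ with $g.a=b$; its proof moreover shows $s(g)=q(a,b)=q_w$. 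Hence $q_w$ is a scale value and the two sets coincide.

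For the ``in particular'' clause: $G$ is uniscalar exactly when its only scale value is $1$, hence -- by the formula just proved -- exactly when $q_w=1$ for every straight $w$. Since $\Delta$ is thick, $q_s\geq2$ for all $s\in S$, so $q_w\geq 2^{\ell(w)}$ and thus $q_w=1$ iff $w=1$; therefore $G$ is uniscalar iff $1$ is the only straight element of $W$. If $W$ is finite then $\ell$ is bounded, so the requirement $\ell(w^n)=n\,\ell(w)$ for all $n\in\mathbb{N}$ forces $\ell(w)=0$, i.e.\ $w=1$, and $G$ is uniscalar. Conversely, if $W$ is infinite then some irreducible factor of $W$ is infinite and so contains a nontrivial straight element by Theorems~4.1.5 and~4.1.6 of~\cite{asymp-behave(word-metrics>CoxeterGs)}; this element is still straight in $W$ because lengths add over direct factors, so $G$ is not uniscalar. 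Finally $W$ is finite iff $\Delta$ is finite -- fixing a chamber $c$, $|\Delta|=\sum_{w\in W}|\{x\colon\delta(c,x)=w\}|=\sum_{w\in W}q_w$, which is finite exactly when $W$ is -- and ``$\Delta$ spherical'' means by definition that $W$ is finite; the equivalences $G\text{ uniscalar}\iff W\text{ finite}\iff\Delta\text{ spherical}$ follow.

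The only input not available earlier in the paper is the existence of a nontrivial straight element in an arbitrary infinite Coxeter group, and this is precisely what the cited theorems supply; I expect the one point worth a moment's attention to be the reduction to the irreducible case there, together with the trivial observation that a straight element of a direct factor of $W$ remains straight in $W$. All the remaining steps are bookkeeping around Corollary~\ref{cor:scale-values=q-values(straight-elements)} and Proposition~\ref{prop:all-straight-displacements-exist}.
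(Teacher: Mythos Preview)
Your proof is correct and follows essentially the same route as the paper: the inclusion ``$\subseteq$'' via Corollary~\ref{cor:scale-values=q-values(straight-elements)}, the inclusion ``$\supseteq$'' via Proposition~\ref{prop:all-straight-displacements-exist}, and the uniscalar dichotomy via the existence of a nontrivial straight element in any infinite Coxeter group. The only minor difference is that the paper cites Marquis~\cite[Corollary~F]{conj-classes&straight-el<CosGs} (Coxeter elements of infinite irreducible components are straight) for that last fact, whereas you invoke Noskov's theorems; both references do the job, and your derivation of the uniscalar clause directly from the set equality and thickness is if anything a bit cleaner than the paper's detour through the elliptic/hyperbolic dichotomy.
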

\begin{proof}
The scale of any element 
of~$G$ 
is 
the $q$-value 
of some straight element of 
the Weyl group 
by Corollary~\ref{cor:scale-values=q-values(straight-elements)}. 
Proposition~\ref{prop:all-straight-displacements-exist} 
then shows, 
that all the scale values listed 
in the statement 
above 
are 
in fact 
realised. 
This shows 
the first statement. 

To derive 
the last statement, 
note that 
$G$ is uniscalar 
if and only if 
all of its elements 
are elliptic. 
The latter 
necessarily happens 
if~$\Delta$ is finite. 
If 
on the other hand, 
$\Delta$ is infinite, 
then its Weyl group 
is infinite as well. 
Now 
every infinite Coxeter group 
contains non-trivial straight elements; 
any Coxeter element in 
an infinite, irreducible component 
will do, 
see~\cite[Corollary~F]{conj-classes&straight-el<CosGs}. 
The remaining claim follows 
form this  
and the part of our claim 
that we already proved. 
%
\end{proof}

The important class 
of rank one isometries 
can also be described 
by their straight displacements 
as shown 
in the next result. 

\begin{prop}
Let~$G$ be 
a closed group of Weyl-transitive, type-preserving automorphisms 
a thick, locally finite building~$(\Delta,\delta)$ 
of type $(W,S)$.  
Then 
a hyperbolic element~$g\in G$ 
is not a rank one isometry 
of the Davis-realisation 
of~$(\Delta,\delta)$
if and only if 
some, equivalently every, straight displacement  
of~$g$ 
is contained 
in $W_I \times W_J$, where either~$W_I$ and~$W_J$ are both infinite, or~$W_I$ is affine and~$W_J$ is finite.
\end{prop}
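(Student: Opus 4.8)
The plan is to translate the geometric condition "$g$ is not rank one" into a statement about the structure of $\Min(g)$ inside the Davis realisation, and then read off the combinatorial consequence for the straight displacement $w = \delta(a,g.a)$, where $a$ is (by Theorem~\ref{thm:everything=straight} and Theorem~\ref{thm:cotype-straight_displacement<=>simplex-Stab=tidy}) a simplex whose stabiliser is tidy for $g$ and whose relative interior meets an axis of $g$. Recall that a semisimple hyperbolic isometry of a $\mathsf{CAT}(0)$ space fails to be rank one exactly when some axis (equivalently every axis, by convexity and cocompactness of the $G$-action on $\Min(g)$) bounds a flat half-plane; since $\Min(g)$ splits isometrically as $(\text{axis})\times Y$ with $g$ acting as translation on the first factor and trivially on $Y$, the rank one failure is equivalent to $Y$ containing a geodesic line, i.e. to $\Min(g)$ containing a $2$-flat. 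So the first step is to record this standard dichotomy and reduce to analysing $2$-flats in $X$ through the chosen axis $L$.

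The second step is to invoke the classification of flats in Davis realisations of Coxeter complexes and buildings — precisely the results of \cite{geoFlats<CAT0-real(CoxGs+Tits-buildings)} already cited in the proof of Theorem~\ref{thm:everything=straight}, together with \cite[Lemma~3.2]{strongTitsAlt(<CoxeterG)} — which say that a $2$-flat in $X$ lies in a single apartment and corresponds to a reflection subgroup of $W$ of the form $W_I\times W_J$ which is either (affine)$\times$(finite) or (infinite)$\times$(infinite), with the flat being the product of a line in the Davis realisation of $W_I$ and a line in that of $W_J$. Here I would use that $w$ is $I$-straight (with $I=\operatorname{cotype}(a)$): $w$ conjugates $I$ to itself and translates along $L$, and $L$ lies in every wall of cotype $i$ for $i\in I$; hence $w$ normalises the relevant standard parabolic and the straight displacement, being conjugate to $w$, lies in the parabolic $W_I\times W_J$ coming from the flat. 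Conversely, if every straight displacement lies in such a $W_I\times W_J$, one builds the flat from the two lines and checks $g$ preserves it, so $g$ is not rank one. The "some $\Leftrightarrow$ every" equivalence follows because all straight displacements on simplices meeting an axis are conjugate (Remark~\ref{rem:uniqueness(straight-displacement)}), and conjugation preserves the property of lying in a standard parabolic of the stated shape.

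The main obstacle I expect is the bookkeeping in the forward direction: from the existence of one flat half-plane bounding the axis, one must extract the correct pair $(W_I, W_J)$ and verify that the chosen straight displacement $w$ — which a priori only knows about one particular tidy simplex $a$ — actually sits inside $W_I\times W_J$ rather than merely being "compatible" with the flat. This requires carefully aligning the apartment containing the flat with an apartment containing $g^{\mathbb N}.c_0$ (as constructed in the proof of Theorem~\ref{thm:combinatorial-axis=metric-axis}) and tracking the cotype-$I$ walls through the identification; the affine-versus-infinite alternative for $W_I$ then comes from whether the second line in the flat is a genuine wall-line (giving an affine factor) or a more general geodesic. The converse and the equivalence of the two formulations are comparatively routine once the flat classification is in hand.
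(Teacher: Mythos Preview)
Your route differs from the paper's in one essential respect: the paper does not classify flats or analyse $\Min(g)$ directly, but instead invokes Theorem~5.1 of Caprace--Fujiwara \cite{rk1-isom(buildings)&q-morph(KMGs)} as a black box. That theorem already characterises failure of rank one for an isometry of a building by the existence of a $g$-stable residue of type $W_I\times W_J$ with the stated constraints on $I,J$. The paper's only contribution is then a short bridge between ``$g$ stabilises such a residue'' and ``$w$ lies in such a $W_I\times W_J$'', and this bridge is the retraction $\rho$ onto an apartment $A$ containing an axis $L$, centred at a chamber $c\supset a$: under the identification $A\cong W$ sending $c\mapsto 1$, the composite $\rho\circ g$ acts on $A$ as $w$, so $g$ stabilising a residue in $\Delta$ corresponds exactly to $w$ stabilising the corresponding coset of $W_I\times W_J$. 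Your plan instead re-derives the content of Caprace--Fujiwara via the $2$-flat classification of \cite{geoFlats<CAT0-real(CoxGs+Tits-buildings)}; this is viable in principle but considerably longer, and the ``bookkeeping obstacle'' you flag is precisely the part that the retraction trick in the paper bypasses.

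There is also a genuine gap in your ``some $\Leftrightarrow$ every'' argument. You appeal to Remark~\ref{rem:uniqueness(straight-displacement)} and assert that conjugation preserves the property of lying in a standard parabolic of the stated shape. But conjugates of elements of a standard parabolic $W_K$ need not lie in any standard parabolic of the same type (already in finite type: a non-simple reflection lies in no proper standard parabolic). One can repair this by passing through parabolic closures and their conjugacy invariance, but that is a further argument you have not supplied. The paper avoids the issue entirely: the implication ``$g$ not rank one $\Rightarrow$ \emph{every} straight displacement lies in some $W_I\times W_J$'' is obtained directly from the proof of $(iii)\Rightarrow(i)$ in \cite[Theorem~5.1]{rk1-isom(buildings)&q-morph(KMGs)}, while the converse only needs \emph{some} such displacement, so the equivalence falls out of the two separate implications rather than from conjugacy.
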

\begin{proof}
Let~$w$ be 
a straight displacement 
of~$g$ 
and~$L$ 
an axis 
of~$g$ 
that 
passes through 
a simplex~$a$ 
whose Weyl-displacment $\delta(a,g.a)$ 
equals~$w$. 
Further, 
let~$A$ be an apartment 
that contains~$L$; 
it will then also contain~$a$. 
Choose a chamber~$c$ 
of~$A$ 
that contains~$a$ 
and 
let~$\rho$ be 
the retraction 
onto~$A$ 
centered at~$c$. 

Then $\rho\circ g$ restricts to 
an automorphism 
of~$A$ 
which is given by~$w$ 
if~$A$ 
is identified with
the Coxeter complex 
of~$W$ 
by mapping~$c\in A$ to~$1\in W$.

Assume that 
$w$ is contained in 
$W_I \times W_J$, where either~$W_I$ and~$W_J$ are both infinite, or~$W_I$ is affine and~$W_J$ is finite. 
Then 
we conclude that~$w$ 
stabilizes $W_I \times W_J$, 
and thus 
--- varying the apartment~$A$ --- 
the isometry~$g$ 
stabilizes 
a residue 
of type $W_I \times W_J$ 
in~$\Delta$. 
By Theorem~5.1 in~\cite{rk1-isom(buildings)&q-morph(KMGs)} 
$g$ is not a rank one isometry. 

If, 
conversely,~$g$ 
is not a rank one isometry, 
the arguments of 
the implication $(\romannumeral3)\Rightarrow (\romannumeral1)$ 
in the proof of  
Theorem~5.1 in~\cite{rk1-isom(buildings)&q-morph(KMGs)} 
show that 
every straight displacement 
of~$g$ 
is contained in 
$W_I \times W_J$, where either~$W_I$ and~$W_J$ are both infinite, or~$W_I$ is affine and~$W_J$ is finite. 
The proof 
is complete. 
\end{proof}

Furthermore, 
we suspect 
that 
Weyl-displacements 
of arbitrary simplices 
under a hyperbolic isometry~$g$ 
can be computed from 
the position of the simplex 
relative to 
an axis 
of~$g$ 
and 
a suitable straight displacement 
of~$g$ 
on this axis.

\end{document}